\newtheorem{theorem}{Theorem}[section]
\newtheorem{lemma}[theorem]{Lemma}
\newtheorem{proposition}[theorem]{Proposition}
\newtheorem{corollary}[theorem]{Corollary}
\theoremstyle{definition}                               
\newtheorem{definition}[theorem]{Definition}
\newtheorem*{example*}{Example}
\newtheorem{conjecture}[theorem]{Conjecture}
\theoremstyle{remark} 
\newtheorem{remark}[theorem]{Remark}
\newtheorem*{remark*}{Remark}
\numberwithin{equation}{section}
\DeclareMathOperator{\Out}{\sf Out}
\DeclareMathOperator{\Pic}{\sf Pic}
\DeclareMathOperator{\Inn}{\sf Inn}
\DeclareMathOperator{\Aut}{\sf Aut}
\DeclareMathOperator{\ad}{ad}
\DeclareMathOperator{\tr}{tr}
\DeclareMathOperator{\diag}{diag}
\def\Grad{\mbox{\rm{Gr}}^{\mbox{\scriptsize{\rm{ad}}}}}
\def\Grat{\mbox{\rm{Gr}}^{\mbox{\scriptsize{\rm{rat}}}}}
\def\GRad{\mathbb{G}\mbox{\rm{r}}^{\mbox{\scriptsize{\rm{ad}}}}}
\def\GRat{\mathbb{G}\mbox{\rm{r}}^{\mbox{\scriptsize{\rm{rat}}}}}
\def\Gr{\mbox{\rm{Gr}}}
\def\C{\mathcal{C}}
\def\F{\mathcal{F}}
\def\C{\mathcal{C}}
\def\P{\mathbb{P}}
\def\D{\mathcal{D}}
\def\c{\mathbb{C}}
\def\d{\partial}
\def\I{{{\rm I}}}
\begin{document}
\title{Notes on the vector adelic Grassmannian}
%
%
%
%
%
     \author{George Wilson}
     \address{Mathematical Institute, 24--29 St Giles, Oxford OX1 3LB, UK}
      \email{wilsong@maths.ox.ac.uk}
\date{31 December 2009}
\begin{abstract}
These are miscellaneous private notes about 
the generalization of \cite{W2} to the vector case. Some parts date 
from around 1998 and are fairly reliable; others are in a very rough 
state, and may contain errors.
\end{abstract}
\maketitle

\section{The Calogero-Moser spaces}

Fix two positive integers $\, n \,$ and $\, r \,$.  We start off with 
the complex vector space $\, V \equiv V(n,r) \,$ of quadruples 
$\, (X,Y;v,w) \,$ where $\, X \,$ and $\, Y \,$ are $\, n \times n \,$ 
matrices, $\, v \,$ is $\, n \times r \,$ and $\, w \,$ is 
$\, r \times n \,$.  The trace form $\, \tr AB \, $ gives a nonsingular 
pairing between the spaces of $\, p \times q \,$ and  $\, q \times p \,$ 
matrices; in this way we may consider $\, V \,$ as the cotangent bundle 
of the space of pairs $ \, (X,v) \,$, thinking of $\, (Y,w) \,$ as 
a cotangent vector at $ \, (X,v) \,$.  Thus $\, V \,$ is equipped with 
the (holomorphic) symplectic form
$$
\tr(dY \wedge dX \, + \, dw \wedge dv) \ .
$$
The group $\, GL(n,\c) \,$ acts symplectically on $\, V \,$ by 
$$
g(X,Y;v,w) \,:=\, (gXg^{-1}, \,gYg^{-1};\, gv, \,wg^{-1})\ ;
$$
the moment map $\, \mu\,:\, V \to \mathfrak{gl}(n,\c) \,$  for this action 
is
$$
\mu(X,Y;v,w) \,=\, [X,Y] + vw \ .
$$
The action of $\, GL(n,\c) \,$ on $\, \mu^{-1}(-\I) \,$ is free, 
so we can form the symplectic quotient
$$
\C \equiv \C_n(r) \,:=\, {\mu^{-1}(-\I)} / {GL(n,\c)} \ .
$$
For $\, r=1 \,$ this is the space studied in \cite{W2}.
As in that case, $\, \C \,$ is a smooth irreducible affine algebraic 
variety; in fact it is a hyperk\"ahler variety (see \cite{N}), but that 
will not be important here.

We denote by $\, \C' \,$ the dense open subset of $\, \C \,$ consisting 
of classes of quadruples $\, (X,Y;v,w) \,$ where $\, X \,$ has 
distinct eigenvalues.  Fix an orbit representative with $\, X \,$ 
diagonal, say $\, X = \diag(x_1,\ \ldots,\ x_n) \,$.  Let 
$\, \{v_1,\ \ldots,\ v_n\} \,$ be the rows of $\, v \,$, and let 
$\, \{w_1,\ \ldots,\ w_n\} \,$ be the columns of $\, w \,$.  Equating 
the diagonal entries in the equation $\, [X,\, Y] + vw = -\I \,$ gives
\begin{equation}
\label{ii}
v_i w_i \,=\, -1   \text{\ \ \ for\ } 1\leq i \leq n \ , 
\end{equation}
and equating the nondiagonal entries gives 
\begin{equation}
\label{ij}
Y_{ij} \,=\, - \,\, \frac{v_i w_j}{x_i - x_j} \text{\ \ \ for\ } i\neq j\ .
\end{equation}
So just as in the case $\, r=1 \,$, the diagonal entries $\, \alpha_i \,$ 
of $\, Y \,$ are free, and the off-diagonal part of $\, Y \,$ is 
determined by $\, (X,\, v,\, w) \,$.
For $\, r=1 \,$, we can use the action of the diagonal matrices in 
$\, GL(n,\c) \,$ to normalize all 
the (scalars) $\, v_i = 1 \,$ (and  $\, w_i = -1 \,$). As we see from 
\eqref{ii}, the corresponding statement for $\, r>1 \,$ is  
that each pair $\, (v_i, w_i) \,$ determines a point 
of the hyperk\"ahler variety
$$
A_r \,:=\, \{ (\xi, \eta) \,:\, \xi \eta = -1 \} / \c^{\times} \ ,
$$
where $\, \xi \,$ and $\, \eta \,$ are row and column vectors of 
length $\, r \,$, and $\, \lambda \in \c^{\times} \,$ acts by 
$\, (\xi, \eta) \mapsto  (\lambda \xi, \lambda^{-1} \eta) \,$.  

Now, our orbit representative is unique up to the action of a 
diagonal matrix $\, D := \diag(d_1,\ \ldots,\ d_n) \,$ and then of 
a permutation matrix. The action of $\, D \,$ is given by 
$$
y_{ij} \,\mapsto\, d_i d_j^{-1} y_{ij}\,,\ \ v_i \,\mapsto\, d_i v_i \,,\ 
w_i \,\mapsto\, d_i^{-1} w_i \ 
$$
(the parameters $\, (x_i)\,$ and $\, (\alpha_i)\,$ staying fixed).  
This means that $\, \C' \,$ is the quotient by the symmetric 
group $\, \Sigma_n \,$ of the space
$$
(\c^n \setminus \Delta) \,\times\, \c^n \,\times\, A_r^n
$$
with parameters $\, (x_i,\, \alpha_i,\ q_i) \,$, where we have set 
$\, q_i := (v_i,\, w_i) \text{\ mod\ } \c^{\times} \,$.  Further, 
$\, \Sigma_n \,$ acts by simultaneous permutation of the 
parameters $\, (x_i,\, \alpha_i,\ q_i) \,$.  Gibbons and Hermsen interpret 
this as saying that $\, \C' \,$ is the phase space for a system of $\, n \,$ 
indistinguishable particles (located at the points $\, x_i \,$), each 
having the variety $\, A_r \,$ as space of ``internal degrees of freedom''. 
\begin{remark*}
More important for us will be the similar subspace $\, \C^d \,$ of $\, \C \,$, 
where $\, Y \,$ has distinct eigenvalues; $\, \C' \,$ and $\, \C^d \,$
are interchanged by the bispectral involution \eqref{bispinv}.

\end{remark*}

\section{The Gibbons-Hermsen hierarchy}

On the symplectic space $\, V \,$ of quadruples $\, (X,Y;v,w) \,$ we consider 
the $\, GL(n,\, \c)$-invariant Hamiltonians
\begin{equation}
\label{hams}
J_{k, \,\alpha} \,:=\, \tr Y^k v \alpha w\ , \quad k \geq 0\,,\   
\alpha \in \mathfrak{gl}(r,\ \c)\ . 
\end{equation}
\begin{proposition}
\label{GH}
\rm{(i)}  The equations of motion of the system with 
Hamiltonian \eqref{hams} are
\begin{equation}
\label{equns}
\begin{cases}
\,dX/dt &\!\!=\, Y^{k-1} v \alpha w + Y^{k-2} v \alpha w Y + \ldots + 
v \alpha w Y^{k-1}\\
\,dY/dt &\!\!=\, 0\\
\,dv/dt &\!\!=\, Y^k v \alpha\\
\,dw/dt &\!\!=\, - \, \alpha w Y^k \ .
\end{cases}
\end{equation}
\rm{(ii)}  The Poisson brackets of the Hamiltonians \eqref{hams} 
are given by
$$
\{J_{k, \,\alpha},\, J_{l, \,\beta} \} \,=\, J_{k+l, \,[\alpha, \, \beta]} \ .
$$
\end{proposition}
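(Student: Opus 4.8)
The plan is to treat both parts as direct consequences of matrix calculus once the canonical structure is made explicit. The symplectic form $\tr(dY\wedge dX + dw\wedge dv)$ exhibits $(X,Y)$ and $(v,w)$ as canonically conjugate pairs under the trace pairing, so the Hamiltonian vector field $\xi_H$ of $H=J_{k,\alpha}$ is recovered by computing $dH$ and matching coefficients in $\iota_{\xi_H}\omega=-dH$ (I fix this sign convention so that the flow of $H$ acts on functions by $g\mapsto\{H,g\}$). The one computation needed is the differential of $H=\tr(Y^k v\alpha w)$. Using $d(Y^k)=\sum_{l=0}^{k-1}Y^l\,dY\,Y^{k-1-l}$ together with cyclicity of the trace, one obtains
\[
dH=\tr\Bigl(\bigl(\textstyle\sum_{l=0}^{k-1}Y^{k-1-l}v\alpha w\,Y^{l}\bigr)dY + (\alpha w Y^k)\,dv + (Y^k v\alpha)\,dw\Bigr).
\]
Reading off the $dX,dY,dv,dw$ components against $\iota_{\xi_H}\omega$ then yields the four equations of (i): the absence of $X$ in $H$ forces $dY/dt=0$, the coefficient of $dv$ gives $dw/dt=-\alpha w Y^k$, that of $dw$ gives $dv/dt=Y^k v\alpha$, and the coefficient of $dY$ is precisely the sum displayed in the first line of \eqref{equns}.

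For part (ii) I would avoid recomputing a bracket from scratch and instead exploit (i): with the sign convention fixed above, the Poisson bracket equals the derivative of $J_{l,\beta}$ along the flow generated by $J_{k,\alpha}$, i.e.\ $\{J_{k,\alpha},J_{l,\beta}\}=\tfrac{d}{dt}J_{l,\beta}$ where $\tfrac{d}{dt}$ is taken along \eqref{equns} with $H=J_{k,\alpha}$. Writing $J_{l,\beta}=\tr(Y^l v\beta w)$ and differentiating, the crucial simplification is that $dY/dt=0$, so the factor $Y^l$ is inert and only the $v$ and $w$ variations contribute. Substituting $dv/dt=Y^k v\alpha$ and $dw/dt=-\alpha w Y^k$ gives
\[
\tfrac{d}{dt}J_{l,\beta}=\tr\bigl(Y^l(Y^k v\alpha)\beta w\bigr)+\tr\bigl(Y^l v\beta(-\alpha w Y^k)\bigr).
\]

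The crux is the final step: using cyclicity to bring both terms to the common form $\tr\bigl(Y^{k+l}v(\cdot)w\bigr)$. The first term is already $\tr(Y^{k+l}v\alpha\beta w)$, while cycling $Y^k$ to the front in the second produces $\tr(Y^{k+l}v\beta\alpha w)$, so the difference collapses to $\tr\bigl(Y^{k+l}v[\alpha,\beta]w\bigr)=J_{k+l,[\alpha,\beta]}$, exactly as claimed. I expect the only genuine obstacle to be bookkeeping: keeping the four trace pairings (and the transposes implicit in them) straight in part (i), and maintaining a single coherent sign convention linking $\omega$, the Hamiltonian vector field, and the Poisson bracket, so that the signs in \eqref{equns} and in the bracket of (ii) emerge consistently rather than with a spurious global minus sign.
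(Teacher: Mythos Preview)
Your proof is correct. The paper itself does not supply a proof of this proposition: it is stated (with the label referring to Gibbons--Hermsen) and then the text moves directly to interpreting and solving the equations of motion, so there is nothing to compare against beyond the implicit claim that the result is a routine calculation. Your argument is exactly that routine calculation, carried out cleanly: the differential of $H=\tr(Y^k v\alpha w)$ is computed correctly using $d(Y^k)=\sum_{l}Y^l\,dY\,Y^{k-1-l}$ and cyclicity, and matching against $\iota_{\xi_H}\omega=-dH$ with $\omega=\tr(dY\wedge dX+dw\wedge dv)$ reproduces \eqref{equns}. For (ii), your device of reading $\{J_{k,\alpha},J_{l,\beta}\}$ as the $t$-derivative of $J_{l,\beta}$ along the $J_{k,\alpha}$-flow is the efficient route, and the observation that $dY/dt=0$ kills all but the $v,w$ contributions makes the collapse to $\tr(Y^{k+l}v[\alpha,\beta]w)$ immediate. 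The only place one could slip is the global sign convention you yourself flag; your choices are internally consistent and land on the stated formulae.
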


The right hand side of the first equation in \eqref{equns} is interpreted 
to be $\, v \alpha w \,$ if $\, k=1 \,$, and $0$ if $\, k=0 \,$.  

In most cases it is easy to write down the solution to the equations 
\eqref{equns} (see the next subsection).  It is clear even without 
any calculations that the flows of these equations are {\it complete} 
(that is, they exist for all complex $\, t \,$).  Indeed, since $\, Y \,$ 
is constant, the equations for the entries of $\, v \,$ and  $\, w \,$ 
are linear with constant coefficients, so their solutions are 
combinations of polynomials and exponentials; the entries 
in $\, X \,$ are therefore of the same kind.  As Gibbons and Hermsen point out, 
the commutation relations in (ii) are 
the same as those in the Lie algebra of 
polynomial loops in  $\, \mathfrak{gl}(r) \,$ (let the loop 
$\, z \mapsto \alpha z^k \,$ correspond to $\,J_{k, \,\alpha} \,$).  These 
observations suggest  
that the flows of the Hamiltonians \eqref{hams} should fit 
together to give an action on each $\, \C_n(r) \,$ of the group  
$\, \Gamma \,$ of holomorphic functions from $\, \c \,$ to $\, GL_r(\c) \,$. 
As we shall see 
next, on the subspace $\, \C^d \,$ where $\, Y \,$ has distinct eigenvalues, 
we can write down this action explicitly.

\subsection*{Solution of the equations of motion on $\,  \C^d$}
Let $\, Y = \diag(\lambda_1, \,\ldots,\, \lambda_n \,)$\,: then the equations 
\eqref{equns} for the rows $\, v_i \,$ of $\, v \,$ and the columns $\, w_j \,$ 
of $\, w \,$ read
$$
dv_i/dt \,=\, \lambda_i^k v_i \alpha \ , \quad 
dw_j/dt \,=\, - \alpha w_j \lambda_j^k \ ,
$$
with solution
\begin{equation}
\label{vw}
v_i(t) \,=\, v_i(0) \exp(\lambda_i^k \alpha t)\ ,\quad
w_j(t) \,=\, \exp(-\lambda_j^k \alpha t) w_j(0) \ .
\end{equation}
So the diagonal entries in $\, X \,$ satisfy
$$
dX_{ii}/dt \,=\, k \lambda_i^{k-1} v_i(0) \alpha w_i(0) \ ,
$$
with solution
\begin{equation}
\label{xii}
X_{ii} \,=\, X_{ii}(0) \,+\, k \lambda_i^{k-1} v_i(0) \alpha w_i(0) t \ ,
\end{equation}
while for $\, i \neq j \,$ we have
\begin{align*}
dX_{ij}/dt &\,=\, (\lambda_i^{k-1} + \lambda_i^{k-2} \lambda_j + \ldots + 
\lambda_j^{k-1}) (v \alpha w)_{ij}\\
 &\,=\, \frac{\lambda_i^k -\lambda_j^k}{\lambda_i -\lambda_j}\,v_i(0) \,\alpha 
\,\exp \{(\lambda_i^k -\lambda_j^k) \alpha t \} \,w_j(0)\\
 &\,=\, \frac{d}{dt}\, \big[ v_i(0) \,
\exp\{(\lambda_i^k -\lambda_j^k)\alpha t\} \,w_j(0)\big] / 
(\lambda_i -\lambda_j) \ .
\end{align*}
So the solution is 
\begin{equation}
\label{xij}
X_{ij}(t) \,=\, ({\rm constant}) \,+\, v_i(t) w_j(t) / 
(\lambda_i-\lambda_j) 
\end{equation}
(as we could have seen at once from the fact 
that $\, [X,Y] + vw \,$ is constant).
So far the calculation has been valid on the whole of $\, V \,$; but if 
we are in $\, \C \,$, then 
$\, X_{ij}(0) = v_i(0)w_j(0)/(\lambda_i - \lambda_j) \,$, 
so the constant in \eqref{xij} is zero.  We can reformulate that remark 
as follows.
\begin{proposition}
\label{action}
Let $\, (X,\,Y;\,v,\,w) \in \C^d  \,$, with 
$\, Y = \diag(\lambda_1, \,\ldots,\, \lambda_n \,)$.  If 
$\, \gamma(z) \in \Gamma \,$, define  $\, (X,\,Y;\,v,\,w)\circ \gamma  \,$ 
by the formulae
\begin{equation}
\label{gform}
\begin{cases}
\,(v \circ \gamma)_i &\!\!\!\!=\, v_i \gamma(\lambda_i)\\
\,(w \circ \gamma)_j &\!\!\!\!=\, \gamma(\lambda_j)^{-1} w_j\\
\,(X \circ \gamma)_{ii} &\!\!\!\!=\, 
X_{ii} \,+\, v_i \gamma'(\lambda_i)\gamma(\lambda_i)^{-1} w_i\\
\,(X \circ \gamma)_{ij} &\!\!\!\!=\, 
(v \circ \gamma)_i\,(w \circ \gamma)_j\,(\lambda_i - \lambda_j)^{-1}\\
\ \ Y \circ \gamma &\!\!\!\!=\,\, Y \ .
\end{cases}
\end{equation}
Then this defines a right action of $\, \Gamma \,$ on $\, \C^d \,$, and the 
trajectory of the 1-parameter subgroup $\, \{\exp(\alpha z^k t) \} \,$ 
is the solution to the equations of motion \eqref{equns}.
\end{proposition}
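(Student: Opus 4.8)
The plan is to verify the two assertions of Proposition~\ref{action} in turn: first that the formulae \eqref{gform} genuinely define a right action of $\,\Gamma\,$, and second that the one-parameter subgroups recover the flows \eqref{equns}. The second point is essentially already in hand. Setting $\,\gamma(z) = \exp(\alpha z^k t)\,$, one has $\,\gamma(\lambda_i) = \exp(\lambda_i^k \alpha t)\,$, so $\,(v\circ\gamma)_i = v_i \exp(\lambda_i^k \alpha t)\,$ matches \eqref{vw}, and similarly for $\,w\,$; for the diagonal entries of $\,X\,$ one computes $\,\gamma'(\lambda_i)\gamma(\lambda_i)^{-1} = k\lambda_i^{k-1}\alpha t\,$ (using that $\,\alpha z^k\,$ commutes with its own derivative so the exponential differentiates naively), which reproduces \eqref{xii}; and the off-diagonal entries agree by the computation \eqref{xij} together with the remark that the constant vanishes on $\,\C\,$. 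So I would dispose of this half quickly by direct substitution.

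The substance is checking the action axioms. First I would confirm that the right-hand sides of \eqref{gform} actually land back in $\,\C^d\,$: since $\,Y\circ\gamma = Y\,$ is unchanged, $\,\C^d\,$ is preserved provided the output quadruple still lies in $\,\mu^{-1}(-\I)\,$, i.e.\ satisfies $\,[X\circ\gamma,\,Y] + (v\circ\gamma)(w\circ\gamma) = -\I\,$. The off-diagonal equation \eqref{ij} holds by construction, since the fourth line of \eqref{gform} is exactly the normalization forced by \eqref{xij} with zero constant. For the diagonal equation \eqref{ii} one must check $\,(v\circ\gamma)_i (w\circ\gamma)_i = v_i\gamma(\lambda_i)\gamma(\lambda_i)^{-1} w_i = v_i w_i = -1\,$, which is immediate. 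The diagonal entries $\,(X\circ\gamma)_{ii}\,$ are genuinely free parameters, so the third line is just a well-defined prescription for them and imposes no constraint.

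Then I would verify the composition law $\,(\cdot\circ\gamma)\circ\delta = \cdot\circ(\gamma\delta)\,$ and that $\,\circ\,$ applied to the identity loop is the identity. The $\,v\,$ and $\,w\,$ components are easy: $\,(v\circ\gamma)_i\,\delta(\lambda_i) = v_i\,\gamma(\lambda_i)\delta(\lambda_i) = v_i\,(\gamma\delta)(\lambda_i)\,$, and likewise for $\,w\,$ with the inverses appearing in the correct order. The off-diagonal entries of $\,X\,$ then automatically compose, being determined by $\,v\,$ and $\,w\,$. The one place needing genuine care is the diagonal entry of $\,X\,$, where I expect the main obstacle to lie: one must show that the additive correction is a cocycle, i.e.\ that applying $\,\gamma\,$ then $\,\delta\,$ produces the same shift as applying $\,\gamma\delta\,$. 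Concretely, the shift after two steps is $\,v_i\gamma'(\lambda_i)\gamma(\lambda_i)^{-1}w_i + (v\circ\gamma)_i\,\delta'(\lambda_i)\delta(\lambda_i)^{-1}(w\circ\gamma)_i\,$, and substituting $\,(v\circ\gamma)_i = v_i\gamma(\lambda_i)\,$, $\,(w\circ\gamma)_i = \gamma(\lambda_i)^{-1}w_i\,$ this should collapse, via the Leibniz identity $\,(\gamma\delta)'\gamma\delta^{-1}\delta^{-1}\cdots = \gamma'\gamma^{-1} + \gamma(\delta'\delta^{-1})\gamma^{-1}\,$ evaluated at $\,\lambda_i\,$, to the single-step shift $\,v_i(\gamma\delta)'(\lambda_i)(\gamma\delta)(\lambda_i)^{-1}w_i\,$ for the composed loop. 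Verifying that this matrix-valued logarithmic-derivative identity survives the sandwiching by the (non-commuting) vectors $\,v_i,w_i\,$ is the crux; once it is confirmed, associativity and the identity axiom follow and the right action is established.
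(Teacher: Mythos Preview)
Your proposal is correct and matches the paper's approach. The paper does not give a separate proof of this proposition: it simply states it as a ``reformulation'' of the explicit solution \eqref{vw}--\eqref{xij} of the equations of motion already computed, leaving the verification of the group-action axioms implicit. Your write-up makes those axioms explicit, which is exactly what is called for.

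One small comment: you flag the survival of the logarithmic-derivative identity under sandwiching by $v_i,\,w_i$ as ``the crux'', but there is no issue there. The identity
\[
(\gamma\delta)'(\gamma\delta)^{-1} \;=\; \gamma'\gamma^{-1} + \gamma\,\delta'\delta^{-1}\,\gamma^{-1}
\]
holds as an equality of $r\times r$ matrices (at each $\lambda_i$), so left-multiplying by the row vector $v_i$ and right-multiplying by the column vector $w_i$ preserves it automatically; no commutativity is needed. (Also, your displayed expression $(\gamma\delta)'\gamma\delta^{-1}\delta^{-1}\cdots$ is garbled; presumably you meant $(\gamma\delta)'(\gamma\delta)^{-1}$.) With that clarification the cocycle check for the diagonal entries is a one-line computation, and the rest of your argument goes through as written.
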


We remark that if $\, Y \,$ is diagonal with repeated eigenvalues, 
the equations \eqref{equns} are still easier to solve.  In section~\ref{ex}  
we shall meet 
the extreme case where $\, Y \,$ is a scalar matrix (in that case 
we have $ \,vw = -\I \,$, so it can happen only if $\, n \leq r \,$).  
If $\, Y = \lambda \I \,$, the equations \eqref{gform} 
for the action of $\, \Gamma \,$ 
simplify to
\begin{equation}
\label{yscalar}
\begin{cases}
\,v \circ \gamma &\!\!\!\!=\, v\, \gamma(\lambda)\\
\,w \circ \gamma &\!\!\!\!=\, \gamma(\lambda)^{-1} w\\
\,X \circ \gamma &\!\!\!\!=\, 
X \,+\, v\, \gamma'(\lambda)\gamma(\lambda)^{-1} w\\
\,Y \circ \gamma &\!\!\!\!=\,\, Y \ .
\end{cases}
\end{equation}

\subsection*{The action of scalars}

We denote by $\, \Gamma_{sc} \subseteq \Gamma \,$ the subgroup of scalar-valued 
maps, that is, those of the form $\, \gamma(z) = e^{p(z)} \I \,$ where 
$\, p \,$ is an entire function.  It is easy to write down how 
$\, \Gamma_{sc} \,$ acts on $\, \C \,$.  Note first that 
on the subspace $\, \mu^{-1}(-\I) \,$ of $\, V \,$, we have
\begin{align*}
\tr Y^kvw &\,=\, \tr \{Y^k(-[X,\,Y] - \I)\}\\
          &\,=\, \tr\{[Y,\,Y^k X] - Y^k\}\\
          &\,=\, -\tr Y^k \ .
\end{align*}
Thus  the Hamiltonians $\, J_{k,\, \I} \,$ and $\, -\tr Y^k \,$
induce the same flows 
on our space $\, \C \,$ (on $\, V \,$ the flows differ 
by the action of $\, g(t) := \exp(-tY^k) \,$). The equations 
of motion are now 
simply $\, dX/dt = -kY^{k-1} \,$, other variables constant, with solution
$$
X(t) \,=\, X(0) - kY^{k-1}t\ , \quad  Y,\,v,\,w \text{ constant}\,.
$$
Note particularly that the 1-parameter subgroup $\, \{\exp(xz) \I \} \,$ 
acts on $\, \C \,$ by 
\begin{equation}
\label{Xx}
X \,\mapsto\, X -x\I \ , \quad  Y,\,v,\,w \text{ constant \,}.
\end{equation}
The matrices $\, X - x\I \,$ for different values of $\, x \,$ are never 
conjugate to each other (because their eigenvalues are different); 
thus this subgroup acts {\it freely} on each space 
$\, \C_n \,$ (for $\, n>0 \,$).

Generalizing slightly, we get the following formula for the action 
of $\, \Gamma_{sc} \,$.
\begin{proposition}
\label{gsc}
Let $\, \gamma(z) := e^{p(z)}\I \in \Gamma_{sc} \,$.  Then the action 
of $\, \gamma \,$ on $\, \C \,$ is given by
\begin{equation}
(X,Y;v,w)\circ \gamma \,=\, (X - p'(Y),Y;v,w)
\end{equation}
just as in the case $\, r=1 \,$.

\end{proposition}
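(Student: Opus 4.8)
The plan is to reduce everything to the single observation, already implicit in the computation preceding the statement, that a scalar loop induces a Hamiltonian depending only on $\,Y\,$. First I would record the generalization of the identity $\,\tr Y^k vw = -\tr Y^k\,$ to an arbitrary entire function: for any function $\,q\,$ of $\,Y\,$ one has, on $\,\mu^{-1}(-\I)\,$,
\[
\tr\bigl(v\, q(Y)\, w\bigr) \,=\, \tr\bigl(q(Y)\,vw\bigr) \,=\, -\tr q(Y)\,,
\]
the middle step being cyclicity of the trace and the last following from $\,vw = -[X,Y]-\I\,$ together with $\,\tr\bigl(q(Y)[X,Y]\bigr)=0\,$ (since $\,q(Y)\,$ commutes with $\,Y\,$). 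Writing $\,\gamma(z)=e^{p(z)}\I\,$, the loop $\,p(z)\I\,$ corresponds under the Gibbons--Hermsen dictionary to the Hamiltonian $\,H_p := \tr\bigl(v\,p(Y)\,w\bigr)\,$, and the identity shows $\,H_p = -\tr p(Y)\,$ as a function on $\,\C\,$.

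Next I would compute the flow of $\,H_p\,$. Since $\,H_p=-\tr p(Y)\,$ depends on $\,Y\,$ alone, Hamilton's equations for the form $\,\tr(dY\wedge dX + dw\wedge dv)\,$ give $\,dY/dt = dv/dt = dw/dt = 0\,$ and $\,dX/dt = -p'(Y)\,$, exactly as in the monomial cases $\,p(z)=xz\,$ and $\,p(z)=tz^k\,$ treated just above the statement. Hence the time-$t$ flow is $\,X\mapsto X - t\,p'(Y)\,$ with $\,Y,v,w\,$ fixed, and since $\,\Gamma_{sc}\,$ is abelian with $\,\gamma=e^{p(z)}\I\,$ the time-$1$ flow of $\,H_p\,$, this yields $\,(X,Y;v,w)\circ\gamma = (X-p'(Y),Y;v,w)\,$. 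That all the $\,H_p\,$ Poisson-commute (being functions of $\,Y\,$, or by Proposition \ref{GH}(ii) since $\,[\I,\I]=0\,$) guarantees that composing flows matches the multiplication $\,e^{p}\I\cdot e^{q}\I = e^{p+q}\I\,$, so the assignment is genuinely a group action reproducing the claimed formula.

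As an independent check, and to pin down the extension from polynomial to entire $\,p\,$, I would verify the formula directly on the dense open set $\,\C^d\,$ using the explicit action \eqref{gform}. There $\,Y=\diag(\lambda_1,\dots,\lambda_n)\,$, $\,\gamma'(\lambda_i)\gamma(\lambda_i)^{-1}=p'(\lambda_i)\I\,$, and $\,v_iw_i=-1\,$, so \eqref{gform} produces the quadruple with $\,(v\circ\gamma)_i = e^{p(\lambda_i)}v_i\,$, $\,(w\circ\gamma)_j = e^{-p(\lambda_j)}w_j\,$, diagonal part $\,X_{ii}-p'(\lambda_i)\,$ and off-diagonal part $\,e^{p(\lambda_i)-p(\lambda_j)}X_{ij}\,$. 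Conjugating by $\,D=\diag\bigl(e^{p(\lambda_1)},\dots,e^{p(\lambda_n)}\bigr)\in GL(n,\c)\,$ clears the exponential factors and returns exactly $\,(X-p'(Y),Y;v,w)\,$, since $\,p'(Y)=\diag\bigl(p'(\lambda_i)\bigr)\,$. As both $\,(X,Y;v,w)\mapsto(X,Y;v,w)\circ\gamma\,$ and $\,(X,Y;v,w)\mapsto(X-p'(Y),Y;v,w)\,$ are holomorphic on $\,\C\,$ and agree on the dense subset $\,\C^d\,$, they agree everywhere.

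The main obstacle I anticipate is not computational but conceptual: justifying that the abstract group element $\,e^{p(z)}\I\,$ really is realized by the time-$1$ flow of $\,H_p\,$ for a genuinely entire (non-polynomial) $\,p\,$, i.e. that the $\,\Gamma_{sc}\,$-action on all of $\,\C\,$ is well defined and restricts compatibly. The commutativity of the scalar Hamiltonians makes the polynomial case unambiguous, and the density argument on $\,\C^d\,$ disposes of the general entire case; the one point to handle with care is that $\,p'(Y)\,$ is interpreted via the spectral (finite) calculus of the $\,n\times n\,$ matrix $\,Y\,$, so that the formula is meaningful for all entire $\,p\,$ and not only polynomials.
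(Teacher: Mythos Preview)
Your argument is exactly the paper's own: reduce the scalar Hamiltonian to $-\tr p(Y)$ via the trace identity, integrate the resulting linear flow $dX/dt=-p'(Y)$, and (what the paper leaves implicit under ``generalizing slightly'') pass from monomials to arbitrary $p$; your additional check on $\C^d$ via \eqref{gform} and conjugation by $D=\diag(e^{p(\lambda_i)})$ is a pleasant but optional confirmation. One small correction of notation: with the paper's conventions $v$ is $n\times r$ and $w$ is $r\times n$, so the product $v\,q(Y)\,w$ is ill-typed; the Hamiltonian should be written $\tr\bigl(q(Y)\,vw\bigr)$ (equivalently $\tr\bigl(w\,q(Y)\,v\bigr)$), after which your identity $\tr\bigl(q(Y)\,vw\bigr)=-\tr q(Y)$ and the rest of the argument go through unchanged.
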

\begin{remark*}
There are other cases where we can write down the solution to the 
equations \eqref{equns} for any $\, Y \,$: for example, note that 
$$
\frac{d}{dt}\, (v \alpha w) \,=\, Y^k v \alpha^2 w \,-\, v \alpha^2 w Y^k 
\,=\, [Y^k, \,v \alpha^2 w] \ ,
$$
so if $\, \alpha^2 = 0 \,$ then 
$\, dX/dt \,$ is again constant.  The case where $\, \alpha \,$ is a 
projection ($ \alpha^2 = \alpha $) is another easy one. 
It is annoying that in general 
there does not seem to be one simple formula for the action of 
$\, \Gamma \,$: maybe we need to check that this action really exists. 
\end{remark*}
\section{Some Grassmannians}
We start off in the spirit of \cite{SW}.  Let $\, H \,$ be the Hilbert 
space of $\, L^2 \,$ functions $\, f : S^1 \to \c^r \,$: we shall regard 
the elements of $\, H \,$ as {\it row} vectors 
$\, f = (f_0,\, \ldots,\, f_{r-1}) \,$ of scalar valued functions. 
Let $\, \Gr \,$ be the restricted Grassmannian associated to the usual 
polarization $\, H = H_+ \oplus H_- \,$.  The group 
$\, \Gamma \,$ of entire loops $\, g : \c \to GL(r,\,\c) \,$ acts 
in the obvious way (on the right) on $\, H \,$ and thence on $\, \Gr \,$. 
If $\, W \in \Gr \,$, we have the {\it Baker function} 
$\, \psi_W(g,\, z) \,$: it is the unique (meromorphic) $\, r \times r \,$ 
matrix valued function on $\, \Gamma \times S^1 \,$ such that
\begin{enumerate}
\item{it has the form 
$\, \psi_W(g,\,z) = (\I \,+\, \sum_1^{\infty} a_i(g)z^{-i})\,g(z)\ $;}
\item{each row of $\,\psi_W(g,\,z)\, $ belongs to $\, W \,$ for each 
$\, g \in \Gamma \,$.}
\end{enumerate}
Thus the $\,i^{th} \,$ row of $\, \psi_W \,$ is just the inverse image 
of the $\,i^{th} \,$ basis vector $\, e_i \in \c^r \,$ under the 
projection $\, Wg^{-1} \to H_+ \,$. The Baker function is singular at 
the points $\, g \in \Gamma \,$ where $\, Wg^{-1} \,$ in not in the 
{\it big cell}, that is, where this projection is not an isomorphism. 
We write $\, \widetilde{\psi}_W(g,\,z) \,$ for the term $\, \I + \ldots \,$ 
in (1) above, so that 
$\, \psi_W(g,\,z) = \widetilde{\psi}_W(g,\,z)g(z) \,$.  Note the 
formula
\begin{equation}
\label{Wgamma}
\widetilde{\psi}_{W\gamma}(g,\,z) \,=\, \widetilde{\psi}_W(g \gamma^{-1},\,z) 
\quad (g,\, \gamma \in \Gamma) \ ,
\end{equation}
which follows directly from the definitions.
If we restrict $\, g \,$ to run over the 1-parameter subgroup 
$\, \{ \exp(xz) \I \} \,$ of $\, \Gamma \,$, we obtain the 
{\it stationary Baker function} 
$$
\psi_W(x,\,z) \,:=\, \widetilde{\psi}_W(\exp(xz),\,z)\,e^{xz} \, .
$$
In contrast to the case $\, r=1 \,$, the stationary Baker function 
may not exist, because the flow $\, W \mapsto W e^{xz} \,$ may stay 
entirely outside the big cell; also, even if it does exist, it may be 
independent of $\, x \,$ (see section~\ref{ex} for examples).

In the above discussion, the unit circle $\, S^1 \,$ could be replaced by a 
larger circle, and we can form the union of the resulting Grassmannians as 
the circles get larger.  Changing notation, from now on we shall denote 
this union by $\, \Gr \,$.

In this paper we are mainly concerned with the much smaller Grassmannian 
$\, \Grat \,$: it consists of the subspaces $\, W \in \c(z)^r \,$ 
such that
\begin{enumerate}
\item{$ p(z)\c[z]^r \subseteq W \subseteq q(z)^{-1}\c[z]^r  
\text{ \ for some polynomials \,} p,\,q \,$ ;}
\item{the codimension of $\, W \,$ in $\, q(z)^{-1}\c[z]^r \,$ is 
$\, r \deg(q) \,$.}
\end{enumerate}
As in \cite{W2} or \cite{BW2}, a suitable closure $\, \overline{W} \,$ 
of $\, W \,$ then belongs to $\, \Gr \,$, and we can recover $\, W \,$ 
as the intersection $\, W = \overline{W} \, \cap \c(z)^r \,$.  In this way 
we may regard $\, \Grat \,$ as a subspace of $\, \Gr \,$; the action of 
$\, \Gamma \,$ on  $\, \Gr \,$ clearly preserves this subspace, because 
we have $\, p(z) H \subseteq \overline{W} \subseteq q(z)^{-1} H \,$. 
For each $\, \lambda \in \c \,$ we have the subspace $\, \Gr_{\lambda} \,$ 
of $\, \Grat \,$ consisting of those $\, W \,$ for the polynomials 
$\, p \,$ and $\, q \,$ can be chosen to be powers of $\, z-\lambda \,$.

It will be convenient to work momentarily with the larger space 
$\, \GRat \,$ of all {\it fat} subspaces of $\, \c(z)^r \,$ (that is, 
subspaces satisfying just the first condition in the definition of 
$\, \Grat \,$).  We denote by $\, \GRad \,$ the set of all 
{\it primary decomposable} subspaces in $\, \GRat \,$
[definition to be discussed elsewhere].
Let $\, W \in \GRat \,$.  We define $\, M_W \subseteq \c(z)[\d_z]^r \,$ 
to consist of all operators $\, D = (D_1, \ldots, D_r) \,$ 
such that\footnote{It would be more proper to write 
$\, \c[z] \star D \subseteq W \,$ since $\, M_W \, $ is just the 
space $\, \D(\c[z], W) \,$ studied in section~\ref{Bisp} below.} 
$\, D.\c[z] \subseteq W \,$ (that is, for each polynomial $\, p(z) \,$ 
the vector $\, (D_1.p, \ldots, D_r.p) \,$ is in $\, W \,$).  Clearly, 
$\, M_W \,$ is a fat right sub-$A$-module of $\, \c(z)[\d_z]^r \,$ 
(we recall again from \cite{BGK} that $ \,M \,$ is {\it fat} if  
$\, p A^r \subseteq M \subseteq q^{-1} A^r \,$ for some polynomials 
$\, p \,$ and $\, q \,$).
The following is proved in \cite{CH} for $\, r=1 \,$ and in \cite{BGK} 
in general.
\begin{theorem}
\label{chth}
The map $\, W \mapsto M_W \,$ gives a bijection from $\, \GRad \,$ to 
the set of fat right sub-$A$-modules of $\, \c(z)[\d_z]^r \,$. The inverse 
map sends $\, M \,$ to $\, M.\c[z] \,$.
\end{theorem}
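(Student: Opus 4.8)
The plan is to establish the bijection by constructing the two maps $\, W \mapsto M_W \,$ and $\, M \mapsto M.\c[z] \,$ explicitly and showing they are mutually inverse, working over the principal ideal domain $\, A = \c[z] \,$. First I would check that $\, M_W \,$ really is a fat right $A$-module: right $A$-linearity is immediate since if $\, D.\c[z] \subseteq W \,$ and $\, a \in A \,$ then $\, (Da).p = D.(ap) \in W \,$ for every polynomial $\, p \,$; and fatness of $\, M_W \,$ should follow from the fatness bounds $\, p(z)\c[z]^r \subseteq W \subseteq q(z)^{-1}\c[z]^r \,$ by translating the inclusions on $\, W \,$ into inclusions on the operators that map $\, \c[z] \,$ into those bracketing lattices. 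Conversely, for a fat right $A$-module $\, M \,$ I would verify that $\, M.\c[z] \,$ is a fat subspace of $\, \c(z)^r \,$ and, crucially, that it is primary decomposable, so that it lands in $\, \GRad \,$ rather than merely in $\, \GRat \,$.

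Next I would prove that the composite $\, M \mapsto M.\c[z] \mapsto M_{M.\c[z]} \,$ is the identity. The inclusion $\, M \subseteq M_{M.\c[z]} \,$ is trivial by definition, since any $\, D \in M \,$ satisfies $\, D.\c[z] \subseteq M.\c[z] \,$. The reverse inclusion is the substantive direction: I must show that if an operator $\, D \,$ sends all of $\, \c[z] \,$ into the subspace $\, M.\c[z] \,$, then $\, D \,$ already lies in $\, M \,$. The natural device is to use the action of $\, \c[z][\d_z] \,$ on the cyclic module $\, \c[z] \,$: one recovers an operator from its values on a sufficiently rich set of polynomials, and the condition $\, D.p \in M.\c[z] \,$ for all $\, p \,$ should force $\, D \,$ to be an $A$-combination of the generators of $\, M \,$. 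For the other composite $\, W \mapsto M_W \mapsto M_W.\c[z] \,$, the inclusion $\, M_W.\c[z] \subseteq W \,$ is again immediate from the definition of $\, M_W \,$; the equality $\, M_W.\c[z] = W \,$ is where the primary decomposability of $\, W \in \GRad \,$ must be used, to guarantee that $\, W \,$ is generated by the images under $\, M_W \,$ and is not strictly larger.

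The main obstacle, and the place where the hypothesis that $\, W \,$ be primary decomposable is indispensable, is precisely the surjectivity/equality $\, M_W.\c[z] = W \,$ together with the claim that $\, M.\c[z] \in \GRad \,$. For a general fat subspace $\, W \in \GRat \,$ there is no reason that every element of $\, W \,$ is of the form $\, D.p \,$ for $\, D \in M_W \,$ and $\, p \in \c[z] \,$; the primary decomposable condition is what localizes the problem at each point $\, \lambda \in \c \,$ and reduces it to a computation inside the local rings, where one can match the lattice $\, W \,$ with the operators annihilating the appropriate jets. I expect the cleanest route is to reduce to the case $\, W \in \Gr_\lambda \,$ supported at a single point, treat that case by a direct local calculation with $\, \d_z \,$ acting on truncated Taylor series, and then assemble the global statement by the primary decomposition, noting that the formation of $\, M_W \,$ and of $\, M.\c[z] \,$ both commute with localization. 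Since this local analysis is exactly the content of the cited results in \cite{CH} and \cite{BGK}, the remaining work is to organize these local bijections into the single global bijection asserted here.
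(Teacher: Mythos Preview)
The paper does not actually prove this theorem; it merely states it and attributes the proof to \cite{CH} (for $r=1$) and \cite{BGK} (in general). So there is no ``paper's own proof'' to compare against, and your outline is in fact the standard architecture of the argument in those references: check well-definedness of both maps, observe the trivial inclusions $M \subseteq M_{M.\c[z]}$ and $M_W.\c[z] \subseteq W$, and then use primary decomposability to localize and obtain the reverse inclusions by a jet calculation at each point. You have also correctly isolated the non-formal step, namely the equality $M_W.\c[z] = W$, as the place where the $\GRad$ hypothesis does all the work.

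One correction: you write ``working over the principal ideal domain $A = \c[z]$'', but in this paper $A$ denotes the Weyl algebra $\c[z][\d_z]$, not the polynomial ring. The Weyl algebra is not a PID (indeed it is noncommutative), and the statement concerns fat right sub-$A$-modules of $\c(z)[\d_z]^r$, which are modules over the Weyl algebra. The rest of your sketch treats the differential-operator side correctly, so this seems to be a slip rather than a structural misunderstanding; but be careful, since any argument that genuinely relied on $A$ being a commutative PID would collapse.
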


Now let $\, L_W \subseteq \c(z)^r \,$ be the space of all leading 
coefficients of the operators in $\, M_W \,$.  Clearly, $\, L_W \,$ is 
a {\it lattice} in $\, \c(z)^r \,$, that is, a sub-$\c[z]$-module of 
full rank $\, r \,$.
\begin{definition}
The {\it adelic Grassmannian} $\, \Grad \,$ is the subset of those 
$\, W \in \GRad \,$ such that $\, L_W = \c[z]^r \,$.
\end{definition}

This definition (though not the notation) is taken from \cite{BGK}: it 
is probably the most important part of that paper.  In the case 
$\, r=1 \,$, the lattice $\, L_W \,$ is just a (principal) fractional 
ideal $\, (q) \subset \c(z) \,$; the group $\, \c(z)^{\times} \,$ acts 
on $\, \GRad \,$, and we can view $\, \Grad \,$ as the quotient space. For 
$\, r>1 \,$ this point of view is not possible: the group 
$\, GL_r(\c(z)) \,$ acts (compatibly) on $\, \GRad \,$ and on the 
lattices in $\, \c(z)^r $, however, $\, \GRad / GL_r(\c(z)) \,$ is 
not $\, \Grad $, but the quotient $\, \Grad / GL_r(\c[z]) \,$. 

\begin{remark*}
As observed in \cite{BGK}, the larger group $\, GL_r(\c(z)[\d_z]) \,$ 
acts compatibly on fat submodules and on $\, \GRad $. According to Stafford, 
torsion-free $\, A$-modules of rank $\, >1$ are free, so this action is 
transitive.  That leads to an identification
$$
\GRad \,\simeq\, GL_r(\c(z)[\d_z]) / GL_r(A)  \quad (r>1) \ ,
$$
but I do not know what use that is.
\end{remark*}

Another thing to note is that for $\, r=1 \,$ every point of the 
spaces $\, \Gr_{\lambda} \,$ belongs to $\, \Grad $, but for 
$\, r>1 \,$ that is not so.  One reason is provided by the 
following lemma.

\begin{lemma}
\label{zW}
Suppose that $\, W \in \GRad \,$ is such that $\, zW \subseteq W \,$. 
Then $\, W = L_W \,$.
\end{lemma}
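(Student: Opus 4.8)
The plan is to prove the two inclusions $W \subseteq L_W$ and $L_W \subseteq W$ separately, exploiting the fact that the hypothesis $zW \subseteq W$ turns $W$ into a $\c[z]$-submodule of $\c(z)^r$; being fat, it is then automatically a full-rank lattice. The whole argument will take place inside the module $M_W$ of operators $D = (D_1,\ldots,D_r)$ with $D.\c[z] \subseteq W$, and the key observation will be that $zW \subseteq W$ lets us left-multiply operators in $M_W$ by $z$ without leaving $M_W$.

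For the inclusion $W \subseteq L_W$, I would note that any $f \in W$, viewed as the order-zero operator ``multiplication by $f$'', lies in $M_W$: indeed $f.\c[z] = f\,\c[z] \subseteq W$ precisely because $W$ is a $\c[z]$-module. Its leading coefficient is $f$ itself, so $f \in L_W$.

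For the reverse inclusion $L_W \subseteq W$, I would argue by induction on the order $m$ of an operator $D \in M_W$ that its leading coefficient $\ell$ lies in $W$. The base case $m=0$ is the order-zero computation just described (the leading coefficient is $D.1 \in W$). For the inductive step, the crucial point is that both $Dz$ and $zD$ belong to $M_W$: the former because $M_W$ is a right $A$-module, the latter because $zD.\c[z] = z(D.\c[z]) \subseteq zW \subseteq W$. Hence the commutator $[z,D] = zD - Dz$ lies in $M_W$. A routine computation using $[z,\d_z^j] = -j\,\d_z^{j-1}$ (and the fact that $z$ commutes with the coefficient functions $a_j$) shows that $[z,D] = -\sum_j j\,a_j\,\d_z^{j-1}$, so $[z,D]$ has order $m-1$ and leading coefficient $-m\ell$; since $-m\ell \neq 0$, the inductive hypothesis gives $-m\ell \in W$, whence $\ell \in W$. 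Combining the two inclusions yields $W = L_W$.

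The routine parts are the verification that $W$ is a lattice and the commutator computation; the one place where real thought is required --- and the only place the hypothesis $zW \subseteq W$ enters --- is the claim that left-multiplication by $z$ preserves $M_W$, which is exactly what makes the commutator trick available. I would expect not to need the primary-decomposability of $W$ (or the bijection of Theorem~\ref{chth}) for this particular statement, only fatness together with the module property forced by $zW \subseteq W$.
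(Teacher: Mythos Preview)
Your proof is correct and rests on the same key observation as the paper's: the hypothesis $zW \subseteq W$ makes $M_W$ stable under left multiplication by $z$, hence under $\ad z$, and the formula $(\ad z)(p\,\d_z^k) = -k\,p\,\d_z^{k-1}$ then lets one strip off leading coefficients by descending induction on order. The difference is in how the argument is finished. The paper packages the $\ad z$ computation as the statement $M_W = L_W A$ and then invokes Theorem~\ref{chth} to conclude $W = M_W.\c[z] = L_W$. You instead prove the two inclusions $W \subseteq L_W$ and $L_W \subseteq W$ directly: the first by noting that each $f \in W$ is (thanks to $zW \subseteq W$) already an order-zero element of $M_W$, the second by the inductive commutator argument. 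Your route is slightly more self-contained and, as you correctly anticipate, does not require the Cannings--Holland/BGK bijection; the paper's route is a line shorter but imports that bijection for the final step.
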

\begin{proof}
The condition $\, zW \subseteq W $ implies that 
$\, zM_W \subseteq M_W \,$.  By construction, $\, M_W z \subseteq M_W \,$, 
so $\, M_W \,$ is stable under $\, \ad z \,$. But 
$\, (\ad z)(p \d_z^k) = -k p \d_z^{k-1} \,$; it follows that 
$\, M_W \,$ contains the leading coefficients of all its operators, 
that is, $\, M_W = L_W A  \,$. By Theorem~\ref{chth}, 
$\, W = M_W.\c[z] = L_W \,$.  
\end{proof}
\begin{corollary}
The only point $\, W \in \Grad \,$ such that $\, zW \subseteq W \,$ 
is the base-point $\, \c[z]^r \,$.  
\end{corollary}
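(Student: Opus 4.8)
The plan is to derive this immediately from Lemma~\ref{zW} together with the definition of the adelic Grassmannian. Suppose $\, W \in \Grad \,$ satisfies $\, zW \subseteq W \,$. Since by definition $\, \Grad \subseteq \GRad \,$, the hypotheses of Lemma~\ref{zW} are met, and that lemma gives $\, W = L_W \,$. On the other hand, membership $\, W \in \Grad \,$ means precisely that $\, L_W = \c[z]^r \,$. Combining these two equalities yields $\, W = \c[z]^r \,$, the base-point.

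It remains only to confirm that the base-point genuinely satisfies the hypothesis (so that the statement is not vacuous): we have $\, z\,\c[z]^r \subseteq \c[z]^r \,$, and $\, \c[z]^r \,$ lies in $\, \Grad \,$ because $\, M_{\c[z]^r} \,$ contains the operators $\, e_i \,$ (acting as the identity in the $\,i^{th}\,$ slot), whose leading coefficients already span $\, \c[z]^r \,$, forcing $\, L_{\c[z]^r} = \c[z]^r \,$. Hence $\, \c[z]^r \,$ is the unique point of $\, \Grad \,$ stable under multiplication by $\, z \,$. There is no real obstacle here: the work is entirely absorbed into the preceding lemma, and the only point worth spelling out is the bookkeeping that the defining condition $\, L_W = \c[z]^r \,$ of $\, \Grad \,$ turns the conclusion $\, W = L_W \,$ of the lemma into the asserted identification with the base-point.
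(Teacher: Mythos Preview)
Your argument is correct and is exactly the intended one: the paper states this as an immediate corollary of Lemma~\ref{zW} combined with the defining condition $L_W = \c[z]^r$ of $\Grad$, and that is precisely what you spell out. The extra verification that the base-point itself lies in $\Grad$ and is $z$-stable is harmless (and true), though not strictly needed for the ``only'' assertion.
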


\section{The map from $\, \C \,$ to $\,\Grad$}

We fix $\, n \,$ distinct complex numbers $\, \lambda_i \,$, another 
$\, n \,$ complex numbers $\, \alpha_i \,$, and $\, n \,$ pairs 
of vectors $\, (v_i,w_i) \,$ with $\, v_i w_i = -1 \,$ (as
above, the $\, v_i \,$ are $\, 1 \times r \,$ vectors and 
the $\, w_i \,$ are $\, r \times 1 \,$).  To these data we assign the 
following space $\, W \in \Grat(r) \,$: a vector-valued 
rational function $\, f \,$ belongs to $\, W \,$ if
\begin{enumerate}
\item{it is regular everywhere except at the points $\, \lambda_i \,$ 
and at $\, \infty \,$};
\item{it has (at most) a simple pole at each point $\, \lambda_i \,$;}
\item{if $\, f = \sum_{-1}^{\infty} f_k^{(i)} (z- \lambda_i)^k \,$ 
is the Laurent expansion of $\, f \,$ near $\, \lambda_i \,$, then
\begin{enumerate}
\item{$f_{-1}^{(i)}\,$ is a scalar multiple of $\, v_i \,$; and}
\item{($f_0^{(i)} + \alpha_i f_{-1}^{(i)}) w_i = 0 \,$ .}
\end{enumerate}}
\end{enumerate}
Note that in the case $\, r=1 \,$, the condition 3(a) is vacuous, and 
3(b) simply says that 
$f_0^{(i)} + \alpha_i f_{-1}^{(i)} = 0 \,$, so we recover the 
space $\, W(\boldsymbol{\lambda},\,\boldsymbol{\alpha}) \,$ of \cite{W2}.

Let us calculate the Baker function of $\, W \,$.  Conditions (1) and (2) 
above imply that it has the form
$$
\psi_W(g,\,z) \,=\, 
\big( \I \,+\, \sum_1^n \, \frac{A_i(g)}{z-\lambda_i}\big) \,g(z) \ ,
$$
where $\, g \in \Gamma \,$, and the $\, r \times r \,$ 
matrices $\, A_i(g) \,$ are determined by the requirement 
that the rows of $\, \psi_W \,$ all belong to $\, W \,$.
In the Laurent expansion of $\, \psi_W \,$  around $\, z=\lambda_i \,$, 
the residue is $\, A_i(g) g(\lambda_i) \,$, and the constant term is 
$$ 
A_i(g) g'(\lambda_i) \,+\, \big\{ \I \,+\, 
\sum_{j \neq i} \, \frac{A_j(g)}{\lambda_i - \lambda_j} \big\}\, g(\lambda_i)\ .
$$
So condition 3(a) above says that $\, A_i(g) g(\lambda_i) = a_i(g)v_i \,$
for some $\, r \times 1 \,$ vectors $\, a_i \,$; and the condition 3(b) reads
$$
\big\{ A_i(g) g'(\lambda_i) \,+\,  \big[ \I \,+\, 
\sum_{j \neq i} \, \frac{A_j(g)}{\lambda_i - \lambda_j} \,+\, 
\alpha_i A_i(g)\big] \, g(\lambda_i) \big\} \, w_i \,=\, 0 \ .
$$
Combining these equations and using $\, v_i w_i = -1 \,$, we get
$$
a_i(g) \,\big[ v_i g(\lambda_i)^{-1} g'(\lambda_i)w_i \,-\, 
\alpha_i \big] \,+\, \sum_{j \neq i} \, a_j(g) \, 
\frac{v_j g(\lambda_j)^{-1} g(\lambda_i) w_i}{\lambda_i - \lambda_j} \,\,=\,\, 
-g(\lambda_i) \,w_i \ \ .
$$
If we set
\begin{equation}
\label{vg}
\begin{cases}
\,v_i(g)&\!\!\!\!:=\, v_i \,g(\lambda_i)^{-1}\\
\,w_i(g)&\!\!\!\!:=\, g(\lambda_i)\,w_i 
\end{cases}
\end{equation}
that becomes
$$
a_i(g) \,\big[ v_i\,g(\lambda_i)^{-1} g'(\lambda_i) \, w_i
 \,-\, \alpha_i \big] \,+\, \sum_{j \neq i} \, a_j(g) \,
\frac{v_j(g) w_i(g)}{\lambda_i - \lambda_j} \,\,=\,\,
-w_i(g) \ ;
$$
or, finally,
$$
a(g) \, X(g) \,=\, w(g) \ ,
$$
where $\, a(g) \,$ and $\, w(g) \,$ are the $\, r \times n \,$ matrices 
with columns $\, a_i(g) \,$ and $\, w_i(g) \,$, and $\, X(g) \,$ 
is the $\, n \times n \,$ matrix with entries 
\begin{equation}
\label{Xg}
\begin{split}
X_{ii}(g) &\,=\, \alpha_i \,-\,v_i\,g(\lambda_i)^{-1} g'(\lambda_i) \, w_i\\
X_{ij}(g) &\,=\, \frac{v_i(g) w_j(g)}{\lambda_i - \lambda_j} 
\text{\quad for } i \neq j \ .
\end{split}
\end{equation}
Let $\, Y := \diag(\lambda_1,\, \ldots,\, \lambda_n) \,$; then we have
\begin{align*}
\sum_1^n \, \frac{A_i(g)}{z-\lambda_i} &\,=\, 
\sum_1^n \, a_i(g)v_i(g)\,(z-\lambda_i)^{-1}\\
&\,=\, a(g)\,(z\I -Y)^{-1}\,v(g)\\
&\,=\, w(g)X(g)^{-1}\,(z\I -Y)^{-1}\,v(g)\ ,
\end{align*}
where $\, v(g) \,$ is the $\, n \times r \,$ matrix with rows $\, v_i(g) \,$.
So finally we get the formula
\begin{equation}
\label{psiW}
\psi_W(g,\,z) 
\,=\, \big\{ \I \,+\, w(g)X(g)^{-1}\,(z\I -Y)^{-1} v(g) \big\} \,g(z) \ . 
\end{equation}

It is now clear how to define the map $\, \beta : \C^d \to \Grad \,$. 
Let $\, (X,Y;\,v,w) \in \C^d \,$, with 
$\, Y = \diag(\lambda_1,\, \ldots,\, \lambda_n) \,$ and 
$\, X_{ii} = \alpha_i \,$; then $\, \beta \,$ maps  
$\, (X,Y;\,v,w) \,$ to the space $\, W \in \Grad \,$ specified 
by the conditions (1)--(3) at the start of this section.
\begin{proposition}
This map $\, \beta \,:\, \C^d \to \Grad \,$ is $\, \Gamma$-equivariant.

\end{proposition}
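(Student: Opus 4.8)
The plan is to unwind both sides and to check that the space $\, W\gamma \,$ satisfies the conditions (1)--(3) defining $\, \beta \,$ for the transformed data. Recall that $\, \Gamma \,$ acts on $\, H \,$, and hence on $\, \Gr \,$, on the right by $\, f \mapsto f\gamma \,$, so that $\, W\gamma = \{f\gamma : f \in W\} \,$; equivalently $\, h \in W\gamma \,$ iff $\, h\gamma^{-1} \in W \,$. Writing $\, W = \beta(X,Y;v,w) \,$ and $\, (X',Y';v',w') := (X,Y;v,w)\circ \gamma \,$, the data attached to $\, W' := \beta(X',Y';v',w') \,$ are, by \eqref{gform}, the same $\, \lambda_i \,$ together with $\, v_i' = v_i\gamma(\lambda_i) \,$, $\, w_i' = \gamma(\lambda_i)^{-1}w_i \,$ and $\, \alpha_i' = \alpha_i + v_i\gamma'(\lambda_i)\gamma(\lambda_i)^{-1}w_i \,$. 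Since $\, \gamma \,$ is entire and invertible at every point, both $\, \gamma \,$ and $\, \gamma^{-1} \,$ are holomorphic and invertible near each $\, \lambda_i \,$; multiplication by $\, \gamma \,$ therefore preserves the pole locus and the property of having at most a simple pole at each $\, \lambda_i \,$, so conditions (1) and (2) transform trivially and it remains only to check the Laurent conditions 3(a) and 3(b).

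For this I would expand $\, f = h\gamma^{-1} \,$ near $\, z = \lambda_i \,$ using the Taylor expansion of $\, \gamma^{-1} \,$, together with the identity $\, (\gamma^{-1})'(\lambda_i) = -\gamma(\lambda_i)^{-1}\gamma'(\lambda_i)\gamma(\lambda_i)^{-1} \,$. The residue transforms by $\, f_{-1}^{(i)} = h_{-1}^{(i)}\gamma(\lambda_i)^{-1} \,$, so condition 3(a) for $\, f \,$ (that $\, f_{-1}^{(i)} \,$ be a multiple of $\, v_i \,$) says exactly that $\, h_{-1}^{(i)} \,$ is a multiple of $\, v_i\gamma(\lambda_i) = v_i' \,$, i.e. 3(a) for $\, h \,$ with datum $\, v_i' \,$. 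The constant term transforms by $\, f_0^{(i)} = h_0^{(i)}\gamma(\lambda_i)^{-1} + h_{-1}^{(i)}(\gamma^{-1})'(\lambda_i) \,$, and the main work is to see that condition 3(b) for $\, f \,$, namely $\, (f_0^{(i)} + \alpha_i f_{-1}^{(i)})w_i = 0 \,$, becomes $\, (h_0^{(i)} + \alpha_i' h_{-1}^{(i)})w_i' = 0 \,$. I expect this single step to be the crux. After substituting and factoring $\, \gamma(\lambda_i)^{-1}w_i = w_i' \,$ on the right, one writes $\, h_{-1}^{(i)} = c_i v_i' = c_i v_i\gamma(\lambda_i) \,$ using 3(a), so that the term coming from $\, (\gamma^{-1})' \,$ reduces to $\, -c_i v_i\gamma'(\lambda_i)\gamma(\lambda_i)^{-1}w_i \,$; combined with $\, h_{-1}^{(i)}w_i' = c_i v_i w_i = -c_i \,$ (here the normalization $\, v_iw_i = -1 \,$ is essential), this equals $\, (v_i\gamma'(\lambda_i)\gamma(\lambda_i)^{-1}w_i)\,h_{-1}^{(i)}w_i' \,$, which is precisely the scalar shift carrying $\, \alpha_i \,$ to $\, \alpha_i' \,$. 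This matches \eqref{gform} exactly and completes the identification $\, W\gamma = W' \,$, which is the assertion of equivariance.

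As an independent check I would also verify equivariance through the Baker function. By \eqref{Wgamma} one has $\, \widetilde{\psi}_{W\gamma}(g,z) = \widetilde{\psi}_W(g\gamma^{-1},z) \,$, and substituting $\, h := g\gamma^{-1} \,$ into the explicit data \eqref{vg}--\eqref{Xg} gives $\, v_i(g\gamma^{-1}) = v_i'\,g(\lambda_i)^{-1} \,$ and $\, w_i(g\gamma^{-1}) = g(\lambda_i)\,w_i' \,$, while, after using $\, (\gamma^{-1})' = -\gamma^{-1}\gamma'\gamma^{-1} \,$ once more, $\, X(g\gamma^{-1}) = X'(g) \,$ with $\, X'(g) \,$ built from the primed data; by \eqref{psiW} this is exactly $\, \widetilde{\psi}_{W'}(g,z) \,$. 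Since the Baker function determines the point of $\, \Gr \,$, this again yields $\, W\gamma = W' \,$, and incidentally shows $\, W\gamma \in \Grad \,$. In either route the only genuine difficulty is the diagonal computation described above; everything else is bookkeeping with factors that are holomorphic and invertible at the $\, \lambda_i \,$.
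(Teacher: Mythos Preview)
Your argument is correct. Your primary route---checking directly that the Laurent conditions (1)--(3) defining $W$ transform under $f \mapsto f\gamma$ into the same conditions for the primed data---is valid, and the key scalar identity you isolate (using $v_i w_i = -1$ to convert the matrix term $h_{-1}\gamma(\lambda_i)^{-1}\gamma'(\lambda_i)w_i'$ into the scalar shift $\alpha_i' - \alpha_i$) is exactly the point where the computation could have gone wrong and does not.

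The paper takes instead your ``independent check'' as its sole argument, and in a more compressed form: it simply observes that the quadruple $(X(g),Y;v(g),w(g))$ appearing in the Baker-function formula \eqref{psiW} is literally $(X,Y;v,w)\circ g^{-1}$, so that the reduced Baker function of $\beta((X,Y;v,w)\circ\gamma)$ at $g$ equals $\widetilde{\psi}_W(g\gamma^{-1},z)$, which by \eqref{Wgamma} is the reduced Baker function of $W\gamma$. This is shorter because the computation of the Baker function has already absorbed the Laurent-coefficient bookkeeping you redo by hand. Your direct approach has the mild advantage of being self-contained (it does not rely on the uniqueness of the Baker function to identify points of $\Gr$), but the paper's route is the more economical one given what has already been established.
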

\begin{proof}
The quadruple $\, (X(g),Y;\,v(g),w(g)) \,$ given by the 
formulae \eqref{vg} and \eqref{Xg} is just 
$\, (X,Y;\,v,w) \circ g^{-1} \,$, where $\circ$ refers to the 
action \eqref{gform}.  So it follows from \eqref{psiW} that $\, \beta \,$ 
maps $\, (X,Y;v,w) \circ \gamma \,$ to the space with reduced 
Baker function $\, \widetilde{\psi}_W(g \gamma^{-1}, \,z) \,$; 
by \eqref{Wgamma}, that is $\, W \gamma \,$, as claimed.
\end{proof}

If we restrict $\, g \,$ to run over the 1-parameter subgroup 
$\, \{\exp(xz) \I \} \,$ of $\, \Gamma \,$, the formula \eqref{psiW} 
simplifies to 
\begin{equation}
\label{psi1}
\widetilde{\psi}_W(x,\,z) 
\,=\, \I \,+\, w (x\I + X )^{-1}(z\I -Y)^{-1} v  \ . 
\end{equation}
We use this formula to define $\, \beta : \C \to \Grad \,$ in general; 
that is, $\, \beta \,$ maps $\, (X,Y;v,w) \,$ to the point 
$\, W \in \Grad \,$ whose stationary Baker function is given by 
\eqref{psi1}.  The main result of these notes should be the following.
\begin{theorem}
\label{main}
As for $\, r=1 \,$, the maps $\, \beta \,$ give a $\, \Gamma$-equivariant 
bijection
$$
\bigsqcup_{n\geq 0} \,\C_n(r) \,\to\, \Grad(r) \ .
$$
\end{theorem}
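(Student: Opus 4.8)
The plan is to construct the inverse of $\,\beta\,$ directly and to reduce the verification that the two maps are mutually inverse to the dense stratum $\,\C^d\,$, where $\,\beta\,$ is given explicitly by the conditions (1)--(3) and the Baker function \eqref{psiW}. Equivariance on $\,\C^d\,$ is exactly the proposition proved just above; on the rest of $\,\C\,$ I would transport the (transparent) $\,\Gamma$-action on $\,\Grad\,$ back through the bijection, which agrees with the $\,\C^d$-action by density of $\,\C^d\,$ in each irreducible $\,\C_n\,$ together with the rationality of the entries of \eqref{psi1}. This is in fact what finally establishes the full $\,\Gamma$-action on $\,\C\,$ whose existence was left open in the last remark of \S2.

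First I would check $\,\beta(\C) \subseteq \Grad\,$. On $\,\C^d\,$ the space $\,W\,$ is cut out at each of the $\,n\,$ distinct points $\,\lambda_i\,$ by a simple pole together with the one-dimensional tangency data 3(a)--3(b); since these conditions are imposed independently at distinct points, $\,W\,$ is primary decomposable, i.e. $\,W \in \GRad\,$, and a short computation with $\,M_W\,$ (in the style of Lemma~\ref{zW}) shows that the leading coefficients of the operators realizing the local conditions already span, so $\,L_W = \c[z]^r\,$ and hence $\,W \in \Grad\,$. To recover the data from $\,W\,$: the support of $\,W\,$ (the poles of its Baker function) is the multiset of eigenvalues of $\,Y\,$, so $\,n\,$ is the total length of $\,W\,$ over its support; the residues and the next Laurent coefficients return $\,v,\,w\,$ and the diagonal of $\,X\,$, while \eqref{ij} reconstructs the off-diagonal part of $\,X\,$. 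On $\,\C^d\,$ this is literally the inversion of \eqref{vg}--\eqref{Xg}, so $\,\beta\,$ is injective there, and the moment-map relation $\,[X,Y]+vw = -\I\,$ holds automatically.

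For surjectivity and the degenerate stratum I would exploit the primary decomposition built into the definition of $\,\GRad\,$: a point $\,W \in \Grad\,$ with support $\,\{\mu_1, \ldots, \mu_s\}\,$ splits into local pieces $\,W_\mu \in \Gr_\mu\,$, matching the decomposition of $\,Y\,$ into generalized eigenspaces (one block $\,Y_\mu = \mu\I + (\text{nilpotent})\,$ per support point), while the inter-block part of $\,X\,$ is forced by the moment-map relation exactly as the off-diagonal entries \eqref{ij} are. This reduces the theorem to the local bijection
\[
\{\,(X,Y;v,w) \in \C_m(r) : (Y-\mu\I)\text{ nilpotent}\,\} \;\longleftrightarrow\; \{\,W \in \Gr_\mu \text{ of length } m\,\}
\]
for each point $\,\mu\,$ and each $\,m\,$, after which the global bijection is assembled as a restricted product over the finitely many support points, compatibly with the $\,GL_n\,$-quotient that glues the blocks.

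The main obstacle is precisely this local bijection at a collided point, the vector analogue of the collision analysis of \cite{W2}. When $\,Y-\mu\I\,$ is genuinely nilpotent --- several particles having collided at $\,\mu\,$ --- the space $\,W\,$ acquires a higher-order singularity there, and one must show that the smooth Calogero--Moser data parametrize \emph{all} such local configurations bijectively, i.e. that the Jordan structure of $\,Y_\mu\,$ together with the attached $\,v,w,X$-data is in bijection with the local conditions cutting out $\,W_\mu\,$. I expect this to need a careful limit of the distinct-eigenvalue formula \eqref{psiW} as the $\,\lambda_i\,$ coalesce, together with a dimension count matching $\,\dim \Gr_\mu\,$ (length $\,m\,$) against the nilpotent Calogero--Moser stratum. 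Proving that this matching is a genuine bijection and not merely a dominant map --- that no information is lost at a collision, which is where smoothness of $\,\C_m(r)\,$ is essential --- is the crux of the whole theorem.
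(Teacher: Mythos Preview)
The paper does \emph{not} prove this theorem. Immediately after the statement the author writes: ``I shall not prove that here: but see Proposition~\ref{latt} below, which takes care of the first new point (involving the definition of $\Grad$). There are two possible ways to prove Theorem~\ref{main}. One is to imitate the arguments of \cite{W2} (Shiota's lemma might give trouble). The other is to show that $\beta$ is the same as the bijection constructed in \cite{BGK}\ldots'' So there is no proof in the paper to compare against; what there is, is a discussion of strategy together with one ingredient (Proposition~\ref{latt}, that $L_W=\c[z]^r$ for $W\in\beta(\C)$).

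Your proposal is essentially a fleshed-out version of the first of the two strategies the paper mentions: primary decomposition to localize at each support point, followed by a collision analysis \`a la \cite{W2}. You correctly isolate the hard step --- the local bijection at a point where eigenvalues of $Y$ coincide --- and you are honest that you have not carried it out. In that sense your write-up is a plan, not a proof, and it matches the paper's own assessment of where the difficulty lies. Note also that the paper flags a specific pitfall you do not mention: Shiota's lemma, which in \cite{W2} is what shows that the stationary Baker function determines $W$; the vector analogue is not obvious (cf.\ the examples in \S\ref{ex} where the stationary Baker function is constant in $x$ or fails to exist).

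Two concrete weak spots in what you \emph{do} write. First, your argument that $L_W=\c[z]^r$ (``a short computation with $M_W$ in the style of Lemma~\ref{zW}'') is a placeholder; the paper actually proves this as Proposition~\ref{latt} using the bispectral machinery (the operators $K_W$ and the anti-involution $b$), and the computation in \S\ref{ex} suggests the direct approach is not as short as you imply. Second, your equivariance argument is circular as stated: you propose to transport the $\Gamma$-action from $\Grad$ back through $\beta$ \emph{after} establishing that $\beta$ is a bijection, and then claim this ``finally establishes the full $\Gamma$-action on $\C$''. But $\Gamma$-equivariance of $\beta$ is part of the theorem, and the paper expects it to be checked independently (the action of $\Gamma$ on $\C$ is already partially written down in \eqref{gform}, \eqref{yscalar}, and Proposition~\ref{gsc}; its global existence is the issue raised in the Remark at the end of \S2).
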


I shall not prove that here: but see Proposition~\ref{latt} below, which 
takes care of the first new point (involving the definition of 
$\, \Grad $).  There are two possible ways to prove Theorem~\ref{main}. 
One is to imitate the arguments of \cite{W2} (Shiota's lemma might 
give trouble).  The other is to show that $\, \beta \,$ is the same 
as the bijection constructed in \cite{BGK} (indirectly, passing through 
bundles over the noncommutative quadric).  For $\, r=1 \,$ the only way 
we know to see that relies on having a group acting transitively on 
each $\, \C_n  \,$, so we would need something like Conjecture~\ref{conj} 
below. Probably both ways are 
worth doing, but neither is really satisfactory. Even for $\, r=1 \,$, 
the definition of $\, \beta \,$ is hard to understand, and has the 
capital defect that it is not obvious that it makes sense locally 
(in $\, z \,$).  And no direct definition at all is known for its inverse. 
A more intelligent approach is needed \ldots

\begin{remark*}
In the case $\, r=1 \,$, the equation \eqref{psi1} can be 
rewritten in the form
\begin{equation}
\label{psi2}
\widetilde{\psi}_W(x,\,z) \,=\, 
\det \big\{ \I \,-\, (z\I - Y)^{-1} (x\I + X)^{-1} \big\} 
\end{equation}
(see \cite{W2}, p.\ 16)
The discrepancies between the formulae \eqref{psi1} and \eqref{psi2} 
and the similar ones in \cite{W2} are 
due partly to changes of notation, partly to errors in \cite{W2}   
(these errors are more difficult to make in the case $\, r>1 \,$). 
To recover the exact formulae of \cite{W2} we have to replace 
$\, (X,\,Y;\,v,\,w) \,$ by $\, (X^t,\,-Y^t;\,w^t,\,v^t) \,$, then change the 
sign of one of $\, v \,$ or $\, w \,$, and restore the notation $\, Z \,$ 
instead of $\, Y \,$.
\end{remark*}
\section{More examples}
\label{ex}

This section gathers together various calculations, some of them  
superfluous.

\subsection*{Some points of $\,  \Gr_0 $}

Here we shall look at some of the points $\, W \in \Gr_0 \,$ which 
satisfy $\, z\, \c[z]^r  \subseteq W \subseteq z^{-1} \c[z]^r \,$.  Such 
$\, W \,$ form a space isomorphic the the 
Grassmannian $\, \Gr(r,2r) \,$ of $r$-dimensional subspaces of $\, \c^{2r} \,$. 
If a typical element of $\, W \,$ has Laurent expansion 
$\, f = f_{-1}z^{-1} + f_0 + O(z) \,$, then the various $\, W \in \Gr(r,2r) \,$ 
can be specified by imposing conditions of the form
$$
f_{-1} A \,+\, f_0 B \,=\, 0 \ ,
$$
where the $\, r \times 2r \,$ matrix $\, (A \,| B) \,$ has full rank 
$\, r \,$. 

First, consider one of the ``big cells'' where $\, B \,$ is invertible; 
we may as well suppose that $\, B \,$ is the identity, so the 
corresponding $\, W \,$ 
consists of functions $\, f \,$ such that $\, f_{-1} A + f_0  = 0 \,$.  
A calculation like the one in the preceding section (but much easier) 
shows that the (reduced) Baker function of $\, W \,$ is
\begin{equation}
\label{psiA}
\widetilde{\psi}_W(g,\,z) \,=\, 
\I \,-\, g(0) \{ g(0)^{-1} g'(0) \,+\, A \}^{-1} z^{-1} g(0)^{-1} \ ;
\end{equation}
in particular, the stationary Baker function is
$$
\widetilde{\psi}_W(x,\,z) \,=\, \I \,-\, (x\I + A)^{-1} z^{-1} \ .
$$
Comparing with the formulae \eqref{yscalar}, \eqref{psiW} and \eqref{psi1}, 
we see that $\, W \,$ is the image under $\, \beta \,$ of the quadruple
$$
(X,Y;v,w) \,=\, (A,0;\I,-\I) \in \C_r(r) \,.
$$

Now let us look at the ``opposite'' big cell where $\, A \,$ is the 
identity, so that $\, W \,$ consists of functions 
$\, f \,$ such that $\, f_{-1} + f_0 B  = 0 \,$. Calculating again as 
above, we find this time that the Baker function is
\begin{equation}
\label{psiB}
\widetilde{\psi}_W(g,\,z) \,=\, 
\I \,-\, g(0) B \,\{ \I \,+\, g(0)^{-1} g'(0)B \}^{-1} z^{-1} g(0)^{-1} \ .
\end{equation}
If $\, B = A^{-1} \,$ is invertible, then of course we recover the 
previous formula \eqref{psiA}.  Let us look at the opposite extreme.  
For $\, B = 0 \,$ we just get the base-point $\, W = \c[z]^{r} \,$, so the 
smallest interesting case is where $\, B \,$ has rank $1$, say
$\, B = ab \,$, where $\, a \,$ and $\, b \,$ are column and row  
vectors of length $\,r\,$. Using the formula 
$$
(\I - ab)^{-1} = \I + ab(1 - ba)^{-1}
$$
for the inverse of an elementary matrix (note that $\, ba \,$ is a scalar) 
we can simplify \eqref{psiB} to get
\begin{equation}
\widetilde{\psi}_W(g,\,z) \,=\, 
\I \,-\, g(0) \,\frac{ab}{1 + \beta(g)} \,z^{-1} g(0)^{-1} \ ,
\end{equation}
where we have set $\, \beta(g) := b \,g(0)^{-1} g'(0) \,a \,$. 

Now observe that the condition $\, f_{-1} + f_0 ab \,=\, 0\,$ 
defining $\, W \,$ 
implies that (i) $\, f_{-1} \,$ is a scalar multiple of $\, b \,$; and 
(ii) $\, f_{-1} a = -f_0 aba = -(ba)f_0 a \,$.  If $\, ba \neq 0 \,$, then 
(ii) can be written as $\, (f_0 + \alpha f_{-1})a = 0 \,$, where 
$\, \alpha := (ba)^{-1} \,$.  Thus $\, W \,$ corresponds 
to the quadruple $\, (X,Y;v,w) = (\alpha, 0; b,-\alpha a)) \in \C_1(r) \,$.  
On the other hand, if $\, ba = 0 \,$, then $\, \beta(e^{xz}) \equiv 0 \,$, 
so the {\it stationary} Baker function of $\, W \,$ is
$$
\widetilde{\psi}_W(x,\,z) \,=\, \I \,-\, ab \,z^{-1} \ .
$$
Notice that it is independent of $\, x \,$!  From the formulae 
\eqref{Xx} and \eqref{psi1} we see that this can never happen if $\, W \,$ 
comes from a point of $\, \C \,$, so these spaces $\, W \,$ provide 
examples of points of $\, \Gr_0 \,$ that do not belong  to $\, \beta(\C) \,$. 
However, we knew that already, because $\, ba = 0 \,$ implies 
$\, zW \subseteq W \,$ (cf.\ Lemma~\ref{zW}). 

\subsection*{A direct calculation of $\, L_W $}
Let us look at the example above in the case $\, a = (1,0)^t \,$,  
$\, b = (0,1) \,$, so that  $\, W \in \Gr_0(2) \,$ is the space 
of vectors $\, f \,$ with Laurent expansion of the form 
$$
f \,=\, (0,c)\,z^{-1} \,+\, (c,d) \,+\, O(z) \ . 
$$
\begin{lemma}
\label{exlatt}
The right $\, A$-module $\, \D(\c[z],\,W) \,$ is (freely) generated 
by $\, (1,z^{-1}) \,$ and $\, (0,1) \,$.
\end{lemma}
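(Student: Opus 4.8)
The plan is to prove the asserted equality of right $\, A$-modules ($\, A = \c[z][\d_z] \,$) directly from Laurent expansions, without invoking the bijection of Theorem~\ref{chth}. First I would rewrite the definition of $\, W \,$ as three conditions on the Laurent data $\, f = f_{-1}z^{-1} + f_0 + O(z) \,$ of a vector $\, f = (f^{(1)},f^{(2)}) \,$: (i) $\, W \subseteq z^{-1}\c[z]^2 \,$ (pole order $\le 1$ at the origin, regular elsewhere); (ii) the first component has no pole, $\, f^{(1)}_{-1} = 0 \,$; (iii) the residue of the second component equals the value of the first at $0$, $\, f^{(2)}_{-1} = f^{(1)}_0 \,$. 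Writing $\, g_1 := (1,z^{-1}) \,$ and $\, g_2 := (0,1) \,$, the inclusion $\, g_1 A + g_2 A \subseteq M_W \,$ is a one-line check: $\, g_1.p = (p,\,z^{-1}p) \,$ has expansion $\, (0,p(0))z^{-1} + (p(0),p'(0)) + O(z) \,$, and $\, g_2.p = (0,p) \,$ has expansion $\, (0,p(0)) + O(z) \,$, both of which satisfy (i)--(iii).

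The content is the reverse inclusion. Given $\, D = (D_1,D_2) \in M_W \,$, conditions (i) and (ii) say that $\, D_1.p \,$ is regular at the origin and lies in $\, z^{-1}\c[z] \,$, hence $\, D_1.p \in \c[z] \,$ for every polynomial $\, p \,$. I would then invoke the elementary fact that any operator in $\, \c(z)[\d_z] \,$ carrying $\, \c[z] \,$ into itself already has polynomial coefficients---equivalently $\, \D(\c[z],\c[z]) = A \,$---to conclude $\, D_1 \in A \,$. Next I set $\, Q := D_2 - z^{-1}D_1 \,$. For each $\, p \,$, condition (iii) says the residue of $\, D_2.p \,$ is $\, (D_1.p)(0) \,$, which is precisely the residue of $\, z^{-1}(D_1.p) \,$; therefore $\, Q.p = D_2.p - z^{-1}(D_1.p) \,$ is regular at $0$, and as a difference of elements of $\, z^{-1}\c[z] \,$ it lies in $\, \c[z] \,$. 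The same fact gives $\, Q \in A \,$, and since $\, g_1 D_1 + g_2 Q = (D_1,\,z^{-1}D_1 + Q) = (D_1,D_2) = D \,$, we obtain $\, D \in g_1 A + g_2 A \,$.

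Freeness is then automatic: if $\, g_1 P + g_2 Q = (P,\,z^{-1}P + Q) \,$ vanishes, reading off the first coordinate forces $\, P = 0 \,$ and hence $\, Q = 0 \,$, so $\, (P,Q) \mapsto g_1 P + g_2 Q \,$ is injective and $\, \{g_1,g_2\} \,$ is a free basis. I expect no genuine obstacle here: the single non-formal input is the identity $\, \D(\c[z],\c[z]) = A \,$, proved by applying an operator to the monomials $\, 1, z, z^2, \ldots \,$ and inducting to see that every coefficient is a polynomial; everything else is bookkeeping with the two linear conditions (ii)--(iii) that cut $\, W \,$ out of $\, z^{-1}\c[z]^2 \,$.
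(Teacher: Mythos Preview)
Your argument is correct and is essentially the paper's own proof: the paper makes the cosmetic substitution $V := zW$ so as to work entirely with polynomial conditions, obtaining generators $(z,1)$ and $(0,z)$ for $\D(\c[z],V)$ by exactly your two steps (first $E \in zA$, then $F - P \in zA$), and then divides by $z$. Your version carries out the same computation directly in $W$, and additionally spells out the easy inclusion and the freeness, both of which the paper leaves implicit.
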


It follows that the lattice in $\, \c(z)^2 \,$ associated to $\, W \,$ 
has the same generators, confirming Lemma~\ref{zW}.

To check Lemma~\ref{exlatt}, it is slightly easier to work with the space 
of polynomial functions 
$\, V := zW \,$.  A vector $\, f = (f_0(z),\,f_1(z)) \,$ 
belongs to $\, V \,$ if and only if we have
\begin{equation}
\label{fcond}
f_0(0) = 0 \quad \text{and} \quad f_1(0) = f_0'(0) \ .
\end{equation}
Let $\, D := (E,F) \in \D(\c[z], V) \,$.  Then the first condition in 
\eqref{fcond} is equivalent to $\, (E.p)(0) = 0 \,$ 
for all polynomials $\, p \,$; 
that is, $\, E.\c[z] \subseteq z\,\c[z] \,$, or $\, E \in zA \,$.  Set 
$\, E = zP \,$ (where $\, P \in A \,$).  Then $\, E.p = z(P.p) \,$, whence
$\, (E.p)' = P.p + z(P.p)' \,$, so $\, (E.p)'(0) = (P.p)(0) \,$.  So the 
second  condition in \eqref{fcond} is equivalent to 
$$
(F-P).\c[z] \subseteq z\,\c[z] \ ,
$$
so $\, F-P \in zA \,$, 
say $\, F-P = zQ \,$.  Thus $\, \D(\c[z], V) \,$ consists of all 
operators of the form 
$$
D \,=\, (zP,\,P + zQ) \,=\, (z,1)P \,+\, (0,z)Q 
\text{\quad for some \ } P,Q \in A \ .
$$
In other words, the right $\, A$-module 
$\, \D(\c[z], V) \subseteq A^2 \,$ is generated by $\, (z,1) \,$ 
and $\, (0,z) \,$.  Lemma~\ref{exlatt} follows.
\subsection*{Outside the big cell}

Here we give an example where the {\it stationary} 
Baker function does not exist. 
That happens when the flow $\, W \mapsto W e^{xz} \,$ on $\, \Gr(r) \,$ stays 
outside the big cell for all $\, x \,$.  By Proposition 8.6 in \cite{SW}, 
that is impossible for $\, r=1 \,$.  But if we identify $\, H(r) \,$ with 
$\, H := H(1) \,$ via the ``interleaving Fourier series'' isomorphism 
(see \cite{SW}, p.\ 14), then multiplication by $\, z \,$ on $\, H(r) \,$ 
corresponds to multiplication by $\, z^r \,$ on $\, H \,$, so the 
$x$-flow on  $\, \Gr(r) \,$ corresponds to the $\, r^{th} \,$ KP flow 
$\, W \mapsto W e^{t_r z^r} \,$ on $\, \Gr(1) \,$. Any of these flows 
(for $\, r>1 \,$) can have orbits that do not meet the big cell.

Let us take $\, r=2 \,$.  The simplest example I know where the 
$\, t_2$-flow on $\, \Gr(1) \,$ lies outside the big cell starts at the  point
$\, H_S \,$ where $\, S = \{ -3, -1;2,3,\ldots \} \,$.  The 
$\, \tau$-function is ($-24$ times) the Schur function of the 
partition $\, (3,2) \,$, namely
\begin{equation}
\label{tau}
\tau_S(\boldsymbol{t}) \,=\, t_1^5 - 4t_2t_1^3 - 12t_3t_1^2 + 
(12t_2^2 + 24t_4)t_1 - 24t_2t_3 \ .
\end{equation}
The corresponding space $\, W \in \Gr_0(2) \,$ consists of all 
(vector-valued) functions of the form
$\, f(z) = f_{-2} \,z^{-2} + f_{-1} \,z^{-1} + \ldots \,$ satisfying the 
four conditions
\begin{itemize}
\item{the first entry in $\, f_{-2}\,$ vanishes;}
\item{the first entry in $\, f_{-1} \,$ vanishes;}
\item{both entries in $\,f_0 \,$ vanish.}
\end{itemize}
Let us see what happens if we try to calculate the stationary Baker 
function of this $\, W \,$.  It must have the form
$$
\psi_W(x,z) \,=\, (\I + A(x) z^{-1} + B(x) z^{-2})\,e^{xz} \ .
$$
The coefficients of $\, z^{-2}, z^{-1} \text{ and } 1 \,$  in 
$\, \psi_W(x,z) \, $ are (respectively)
$$
B(x)\,; \quad A(x) + x B(x) \,; \quad \text{and} \quad \I + xA(x) + 
\tfrac{1}{2} x^2 B(x) \ . 
$$
The conditions above for the rows of $\, \psi_W \,$ to belong to $\, W \,$ say 
that the first column of $\, B(x) \,$ vanishes; then that 
that the first column of $\, A(x) \,$ vanishes; and finally 
that the first column of $\, \I \,$ vanishes!
\begin{remark}
Let us recall that (in the case $\, r=1 \,$) the roots of the polynomial  
$\, \tau_W(t_1, 0, 0, \ldots) \,$ are the eigenvalues of the matrix $\, X \,$ 
in the pair $\, (X,Y) \in \C \,$ corresponding to $\, W $, and that   
$\, W \,$ is outside the big cell exactly when $\, X \,$ is singular, 
that is when $\, \tau_W(0, 0, 0, \ldots) = 0 \,$. The $\, k^{th} \,$ KP 
flow just translates the variable $\, t_k \,$ in $\, \tau_W \,$.  In our 
example \eqref{tau}, we have $\, \tau_S(0, t_2,0, \ldots) \equiv 0 \,$; 
that means indeed that the $\, t_2$-flow stays outside the big cell. 
This $\, \tau$-function is quite interesting from another point of view: 
we have $\, \tau_S(t_1,0,t_3,0,\ldots) = t_1^5 -12t_3t_1^2 \, $, so the 
$\, t_3$-flow stays inside the ``bad'' set where 
$\, \tau_W(t_1, 0, 0, \ldots) \,$ has a multiple root.
\end{remark}
\section{Transposing matrices}

The appearance of a transpose in the formula \eqref{psitr} below forces us 
to notice some elementary points, which we review in this section

Let $\, R \,$ be a ring: we denote the multiplication in $\, R \,$ by 
juxtaposition and the opposite multiplication by $\, \star \,$, so that 
we have $\, p \star q = qp \,$.  In what follows, $\, D, E, \ldots \,$ 
will denote matrices with entries in $\, R \,$ (not necessarily square, 
but of sizes such that the products we write down are defined).  The 
transpose of a matrix $\, D \,$ is written $\, D^t \,$.  If $\, R \,$ 
is commutative, we have the formula $\, (DE)^t = E^t D^t \,$: in 
general that becomes $\, (D \star E)^t = E^t D^t \,$, or, equivalently
\begin{equation}
\label{newmult}
D \star E \,=\, (E^t D^t)^t \ .
\end{equation}
Here $\, D \star E \,$ denotes the usual product of matrices, but using the 
$\, \star$-multiplication on the scalar entries. 

If $\, b \,$ is an anti-automorphism of $\, R \,$, then we have a 
formula not using $\, \star \,$, namely
\begin{equation}
\label{bt}
b(DE)^t \,=\, b(E)^t \,b(D)^t \ .
\end{equation}
As a special case, let $\, A \,$ be an invertible (square) matrix 
over $\, R \,$.  Then applying the last rule to the formula 
$\, A A^{-1} = \I \,$, we get $\, b(A^{-1})^t \,b(A)^t = \I \,$, that is 
\begin{equation}
\label{binv}
\big[ b(A)^t \big]^{-1} \,=\, b(A^{-1})^t \ .
\end{equation}

Now let $\, \F \,$ be a left $\, R$-module; in what follows $\, \varphi \,$ 
will denote a matrix (not necessarily square) with entries in $\, \F \,$.  
Matrices over $\, R \,$ act on the left of $\, \varphi \,$ in an obvious 
way. Regarding  $\, \F \,$ as a right module over the ring opposite to 
$\, R \,$, we make them act also on the right, setting  
\begin{equation}
\label{staract}
\varphi \star D \,:=\, (D^t.\varphi^t)^t \ ,
\end{equation}
where \,$.$\, denotes the given left action.  We then have the rule
\begin{equation}
\label{stact}
(\varphi \star D)\star E \,=\, \varphi \star (D \star E) \ .
\end{equation}
\begin{remark*}
In our application $\, R \,$ will be a ring of differential operators, 
$\, \F \,$ a module of differentiable functions.
\end{remark*}

\section{Bispectrality}

\label{Bisp}
If in the formula \eqref{psi1} we interchange $\, x \,$ and $\, z \,$ and 
take the matrix transpose, we get 
\begin{equation}
\label{psitr}
\widetilde{\psi}_W(z,x)^t \,=\, \I \,+\, 
v^t(x\I - Y^t)^{-1}(z\I + X^t)^{-1}w^t \ .
\end{equation}
This has the same form as \eqref{psi1}, but with $\, (X,Y;v,w) \,$ 
replaced by 
\begin{equation}
\label{bispinv}
b(X,Y;v,w) \, := \, -\,(Y^t,X^t;w^t,v^t) \ .
\end{equation}
Clearly, $\, b \,$ is an involution on $\, \C \,$; we use the same symbol 
$\, b \,$ to denote the induced involution on $\, \Grad \,$, so that we 
have the formula
\begin{equation}
\label{psib}
\psi_{b(W)}(x,z) \,=\,  \psi_W(z,x)^t \ .
\end{equation}
Note that the corresponding formula for the operator $\, K_W \,$ is 
\begin{equation}
K_{b(U)} \,=\, b(K_U)^t \ ,
\end{equation}
where on the right $\, b \,$ is the anti-automorphism which interchanges 
$\, x \,$ and $\, \d_x \,$.

Now fix two integers $\, r \,$ and $\, s \,$;  let $\, U \in \Grad(r) \,$ 
and $\, V \in \Grad(s) \,$.  We define $\, \D(U,V) \,$ to be the set of 
all $\, r \times s \,$ matrices $\, D \,$ with entries in 
$\, \c(z)[\d_z] \,$ such that $\, U \star D \subseteq V \,$. 
The next proposition generalizes Proposition 8.2 in \cite{BW2}, 
and is proved exactly as in \cite{BW2}.
\begin{proposition}
\label{3eq}
The following are equivalent.\\
{\rm (i)} $\,D \in \D(U,V)\,$.\\
{\rm (ii)} There is an $\, r \times s \,$ matrix $\, \Theta(x) \,$ with 
entries in $\, \c(x)[\d_x] \,$ such that\\
$ \psi_U(x,z) \star D(z) = \Theta(x).\psi_V(x,z) \,$.\\
{\rm (iii)} The $\, r \times s \,$ matrix operator 
$\, K_U \,b(D)(x) \,K_V^{-1} \,$  is differential.\\
Furthermore, the matrix $\, \Theta \,$ is uniquely determined by 
the formula in {\rm (ii)}, and coincides with the operator 
$\, K_U \,b(D)(x) \,K_V^{-1} \,$ in {\rm (iii)}.
\end{proposition}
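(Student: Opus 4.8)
The plan is to prove the three-way equivalence by first disposing of {\rm (ii)} $\Leftrightarrow$ {\rm (iii)} through a translation into an identity of ordinary differential operators in $\, x \,$, and only then linking these to {\rm (i)} by means of the defining property of the Baker function. Throughout I write $\, \psi_U(x,z) = K_U.e^{xz} \,$ and $\, \psi_V(x,z) = K_V.e^{xz} \,$ (the identity matrix times the scalar exponential being understood), and I lean on two elementary facts. First, a matrix $\, x$-operator $\, P \,$ is determined by $\, P.e^{xz} \,$: if $\, P.e^{xz} = 0 \,$ for all $\, z \,$ then $\, P = 0 \,$, since the symbol is read off from $\, P.e^{xz} = P(x,z)e^{xz} \,$. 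Second, the bispectral substitution rule $\, e^{xz}\star D(z) = b(D)(x).e^{xz} \,$, valid because $\, z\,e^{xz} = \d_x e^{xz} \,$ and $\, x\,e^{xz} = \d_z e^{xz} \,$; this is exactly why $\, D \mapsto b(D) \,$ is an anti-automorphism, and its compatibility with \eqref{stact} is what keeps the matrix transposes of the previous section consistent.

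Granting these, {\rm (ii)} becomes a short computation. Using \eqref{stact} to let $\, D \,$ act only on the $\, z$-dependent factor $\, e^{xz} \,$ (commuting with the $\, x$-operator $\, K_U \,$), I get $\, \psi_U \star D = K_U.(e^{xz}\star D) = (K_U\,b(D)(x)).e^{xz} \,$, while $\, \Theta.\psi_V = (\Theta\,K_V).e^{xz} \,$. By the injectivity fact, the equation in {\rm (ii)} holds if and only if $\, K_U\,b(D)(x) = \Theta\,K_V \,$ as $\, x$-operators, that is, $\, \Theta = K_U\,b(D)(x)\,K_V^{-1} \,$. This shows at once that $\, \Theta \,$, if it exists, is unique and equal to the operator of {\rm (iii)}; and since the only constraint imposed on $\, \Theta \,$ in {\rm (ii)} is that it be differential (entries in $\, \c(x)[\d_x] \,$, rather than merely pseudodifferential, as the factor $\, K_V^{-1} \,$ a priori allows), condition {\rm (ii)} holds precisely when $\, K_U\,b(D)(x)\,K_V^{-1} \,$ is differential, which is {\rm (iii)}.

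It remains to prove {\rm (i)} $\Leftrightarrow$ {\rm (ii)}. For {\rm (ii)} $\Rightarrow$ {\rm (i)}: the rows of $\, \psi_V \,$ lie in $\, V \,$ for every $\, x \,$, and $\, V \subseteq \c(z)^s \,$ is an $\, x$-independent subspace closed under $\, \c$-linear combinations and under $\, \d_x \,$ of $\, x$-families, so applying the $\, x$-operator $\, \Theta(x) \,$ keeps the rows of $\, \Theta.\psi_V = \psi_U\star D \,$ inside $\, V \,$. I then invoke the spanning lemma that the rows of $\, \psi_U \,$ together with their $\, x$-derivatives generate $\, U \,$; feeding these into $\, \psi_U \star D \subseteq V \,$ (which persists under $\, \d_x \,$ by the same stability of $\, V \,$) yields $\, U\star D \subseteq V \,$, i.e.\ {\rm (i)}. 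For {\rm (i)} $\Rightarrow$ {\rm (ii)}: since the rows of $\, \psi_U \,$ lie in $\, U \,$, condition {\rm (i)} forces the rows of $\, \psi_U \star D \,$ into $\, V \,$, and the dual spanning statement---that every such $\, x$-family of elements of $\, V \,$ is obtained by applying a unique $\, x$-differential operator to $\, \psi_V \,$---produces the required $\, \Theta \,$.

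The genuine obstacle is this spanning lemma, on both the $\, U \,$ and the $\, V \,$ side: that the Baker function, through its $\, x$-derivatives, faithfully reconstructs its point of $\, \Grad \,$. This is exactly where the adelic hypothesis $\, L_W = \c[z]^r \,$ and the differential (rather than merely pseudodifferential) nature of the operators in play enter, and where the bookkeeping of the matrix transposes from the previous section must be carried out with care, so that the $\, \star$-action and the anti-automorphism $\, b \,$ match up in the right order. Once it is in place, the argument is formally identical to the scalar case treated in \cite{BW2}.
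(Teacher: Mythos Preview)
The paper does not actually give a proof of this proposition: it simply says that it ``generalizes Proposition~8.2 in \cite{BW2}, and is proved exactly as in \cite{BW2}''.  So there is nothing to compare against except the \cite{BW2} argument itself, and your write-up is essentially a sketch of that argument, with the matrix bookkeeping made explicit.  Your treatment of {\rm (ii)} $\Leftrightarrow$ {\rm (iii)} is clean and correct: reducing both sides to the operator identity $\,K_U\,b(D) = \Theta\,K_V\,$ via $\,P \mapsto P.e^{xz}\,$ is exactly the right move, and it immediately gives the uniqueness of $\,\Theta\,$ and its identification with $\,K_U\,b(D)\,K_V^{-1}\,$.

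For {\rm (i)} $\Leftrightarrow$ {\rm (ii)} your outline is right in spirit, but the ``dual spanning statement'' you invoke for {\rm (i)} $\Rightarrow$ {\rm (ii)} is phrased too strongly to be literally true: it is not the case that \emph{every} $\,x$-family of elements of $\,V\,$ is $\,\Theta.\psi_V\,$ for some differential $\,\Theta\,$.  What one actually does (and what \cite{BW2} does) is start from the pseudodifferential operator $\,\Theta := K_U\,b(D)\,K_V^{-1}\,$, which always exists and always satisfies $\,\psi_U \star D = \Theta.\psi_V\,$, and then show that its strictly negative-order part $\,\Theta_-\,$ vanishes.  From {\rm (i)} the rows of $\,\psi_U \star D\,$ lie in $\,V\,$; since $\,\Theta_+.\psi_V\,$ also has rows in $\,V\,$, so does $\,\Theta_-.\psi_V\,$.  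But $\,\Theta_-.\psi_V = (\Theta_- K_V).e^{xz}\,$ has the form $\,(O(z^{-1}))\,e^{xz}\,$, and the virtual-dimension~$0$ condition on $\,V\,$ (together with the spanning property you correctly flag as the genuine input) forces such a family to vanish identically.  This is presumably what you have in mind when you defer to \cite{BW2}; it would sharpen your exposition to say so explicitly rather than to assert the overly general ``dual spanning'' claim.
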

\begin{proposition}
\label{bbij}
The map $\, D(z) \mapsto \Theta(z)^t \,$ is a bijection from $\, \D(U,V) \,$ 
to $\, \D(b(V), b(U)) \,$.
\end{proposition}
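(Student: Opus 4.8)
The plan is to derive the result directly from the three-way equivalence of Proposition~\ref{3eq}, combined with the compatibility formulas for $b$ and transpose established in the ``Transposing matrices'' section. By Proposition~\ref{3eq}, for $D \in \D(U,V)$ the matrix $\Theta$ is uniquely determined and equals $K_U\, b(D)(x)\, K_V^{-1}$. To show that $\Theta^t$ lies in $\D(b(V),\,b(U))$, I would verify criterion (iii) of Proposition~\ref{3eq} for the pair $(b(V),\,b(U))$: that is, I must check that the operator $K_{b(V)}\, b(\Theta^t)(x)\, K_{b(U)}^{-1}$ is differential. Here the roles of $r$ and $s$ are swapped, so $\Theta^t$ is the $\, s \times r \,$ matrix playing the part of ``$D$''.

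The computation of this operator is the heart of the argument, and it should collapse to $D^t$ after substituting the known identities. First I would use $K_{b(W)} = b(K_W)^t$ to rewrite $K_{b(V)} = b(K_V)^t$ and $K_{b(U)} = b(K_U)^t$. Since $b$ (now the anti-automorphism interchanging $x$ and $\d_x$) commutes with matrix transpose when applied entrywise, one has $b(\Theta^t) = b(\Theta)^t$; expanding $\Theta = K_U\, b(D)\, K_V^{-1}$ by repeated use of formula \eqref{bt} gives $b(\Theta)^t = b(K_V^{-1})^t\, D^t\, b(K_U)^t$, where I have used the involutivity of $b$ to simplify $b(b(D)) = D$ (up to the renaming $z \leftrightarrow x$). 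Substituting and invoking \eqref{binv} in the form $b(K_V^{-1})^t = [b(K_V)^t]^{-1}$ and $b(K_U^{-1})^t = [b(K_U)^t]^{-1}$, the outer factors cancel in pairs and the whole expression reduces to $D^t$. Since $D$ has entries in $\c(z)[\d_z]$, so does $D^t$, so the operator is differential; by criterion (iii), $\Theta^t \in \D(b(V),\,b(U))$, and by the uniqueness clause its associated $\Theta$-matrix is exactly $D^t$.

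Bijectivity then follows from an involution argument. Applying the same construction to $\Theta^t \in \D(b(V),\,b(U))$ lands in $\D(b(b(U)),\,b(b(V))) = \D(U,V)$, since $b$ is an involution on $\Grad$; and because the associated matrix of $\Theta^t$ was just shown to be $D^t$, the image of $\Theta^t$ under the map is $(D^t)^t = D$. Hence the map squares to the identity and is therefore a bijection (indeed an involution, once the identifications $b(b(U)) = U$ and $b(b(V)) = V$ are made).

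The main obstacle I anticipate is bookkeeping rather than conceptual: the symbol $b$ is overloaded, denoting both the bispectral involution on $\Grad$ and the $x \leftrightarrow \d_x$ anti-automorphism, and one must track carefully the renaming of the spectral variable $z$ against the auxiliary variable $x$, as well as the interplay between the ordinary matrix transpose and the entrywise anti-automorphism. Getting the order of factors right in each application of \eqref{bt}, and confirming that $b$ genuinely commutes with transpose at the matrix level, is where errors are most likely to creep in.
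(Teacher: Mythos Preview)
Your argument is correct and coincides almost verbatim with the paper's \emph{second} proof: compute $b(\Theta)^t = b(K_V^{-1})^t\,D^t\,b(K_U)^t = K_{b(V)}^{-1}\,D^t\,K_{b(U)}$ via \eqref{bt} and \eqref{binv}, so $K_{b(V)}\,b(\Theta)^t\,K_{b(U)}^{-1} = D^t$ is differential and criterion (iii) of Proposition~\ref{3eq} applies, with bijectivity following from the involution. The paper also supplies a first proof using criterion (ii) instead---swapping $x \leftrightarrow z$ in the Baker-function identity and invoking \eqref{psib} and \eqref{staract}---but the content is the same.
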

\begin{proof}
Let $\, D \in \D(U,V) \,$.   Interchanging $\, x \,$ and $\, z \,$ 
in the formula in (ii) above characterizing $\, \Theta \,$, we find that 
it is equivalent to 
$$
\psi_U(z,x) \star D(x) \,=\, \Theta(z).\psi_V(z,x) \ . 
$$
By \eqref{psib}, that is the same as
$$
\psi_{b(U)}(x,z)^t \star D(x) \,=\, \Theta(z).\psi_{b(V)}(x,z)^t \ .
$$
By \eqref{staract},  this is equivalent to 
$$
\big( D(x)^t.\psi_{b(U)}(x,z) \big)^t \,=\, 
\big( \psi_{b(V)}(x,z)\star\Theta(z)^t \big)^t \ ,
$$
or, finally,
$$
\psi_{b(V)}(x,z)\star\Theta(z)^t \,=\, D(x)^t.\psi_{b(U)}(x,z) \ .
$$
By Proposition~\ref{3eq}, this means that 
$\, \Theta(z)^t \in \D(b(V), b(U)) \,$.   Proposition~\ref{bbij} follows 
by symmetry.
\end{proof}
\begin{proof}[Second proof]
Let us check that again, this time using the formula in (iii) of 
Proposition~\ref{3eq}.  Let $\, D \in \D(U,V) \,$, so that the 
operator $\, \Theta := K_U \,b(D) \,K_V^{-1} \,$ is differential.  
Using the rules \eqref{bt} and \eqref{binv}, we can calculate
\begin{equation*}
b(\Theta)^t \,=\, b(K_V^{-1})^t \,D^t\, b(K_U)^t 
\,=\, K_{b(V)}^{-1} \,D^t\, K_{b(U)} \ .
\end{equation*} 
Hence $\, D^t = K_{b(V)} \,b(\Theta)^t \,K_{b(U)}^{-1} \,$.  
Since $\, D^t \,$ is differential, Proposition~\ref{3eq} shows again that 
$\, \Theta^t \in \D(b(V),b(U)) \,$. 
\end{proof}

Now let us set $\, V=U \,$ in the above; we write $\, \D(U) \,$ instead 
of $\, \D(U,U) \,$.  By \eqref{stact},  $\, \D(U) \,$ is an algebra 
with respect to the multiplication \eqref{newmult}.  If $\, D \in \D(U) \,$ 
set $\, B(D) := \Theta^t \,$ (where $\, \Theta \,$ is as above). 
\begin{proposition}
The bijection $\, B : \D(U) \to \D(b(U)) \,$ is a anti-isomorphism 
of algebras.
\end{proposition}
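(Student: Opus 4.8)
The plan is to observe that the hard work is already done: bijectivity of $\, B \,$ is Proposition~\ref{bbij} with $\, V = U \,$, and the fact that $\, \D(U) \,$ and $\, \D(b(U)) \,$ are algebras under the $\star$-multiplication \eqref{newmult} is exactly what \eqref{stact} provides. So the whole content of the proposition reduces to showing that $\, B \,$ reverses products, i.e. that $\, B(D \star E) = B(E) \star B(D) \,$ for all $\, D, E \in \D(U) \,$.

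First I would establish the key identity $\, \Theta_{D \star E} = \Theta_D \Theta_E \,$, where on the right we take the \emph{ordinary} composition of matrix operators in $\, x \,$, and $\, \Theta_D, \Theta_E, \Theta_{D\star E} \,$ are the operators attached to $\, D, E, D\star E \,$ by Proposition~\ref{3eq}(ii). Starting from $\, \psi_U(x,z)\star(D\star E) = (\psi_U\star D)\star E \,$, which is just associativity \eqref{stact}, I substitute $\, \psi_U \star D = \Theta_D.\psi_U \,$, then use that the left action of the $\, x$-operator $\, \Theta_D \,$ commutes with the right $\star$-action of the $\, z$-operator $\, E \,$ (these involve different variables; this is a one-line index check from \eqref{staract}), and finally substitute $\, \psi_U \star E = \Theta_E . \psi_U \,$. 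This yields $\, \psi_U \star (D \star E) = (\Theta_D \Theta_E).\psi_U \,$, and the uniqueness clause of Proposition~\ref{3eq} forces $\, \Theta_{D\star E} = \Theta_D \Theta_E \,$.

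Granting this, the conclusion is pure bookkeeping with transposes. Since $\, B(D) = \Theta_D^{\,t} \,$, I get $\, B(D\star E) = \Theta_{D\star E}^{\,t} = (\Theta_D\Theta_E)^t = \Theta_E^{\,t}\,\Theta_D^{\,t} \,$; on the other hand, \eqref{newmult} gives $\, B(E)\star B(D) = (B(D)^t B(E)^t)^t = (\Theta_D\Theta_E)^t = \Theta_E^{\,t}\,\Theta_D^{\,t} \,$, using $\, B(D)^t = \Theta_D \,$ and $\, B(E)^t = \Theta_E \,$. The two agree, so $\, B \,$ is an anti-homomorphism, and being bijective it is an anti-isomorphism of algebras.

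An alternative and perhaps quicker route to the same identity $\, \Theta_{D\star E} = \Theta_D \Theta_E \,$ runs through the formula $\, \Theta_D = K_U\, b(D)\, K_U^{-1} \,$ of Proposition~\ref{3eq}(iii): since $\, b \,$ is an anti-automorphism of the scalar operator ring and $\, \star \,$ is built from the opposite multiplication, one checks entrywise that $\, b(D \star E) = b(D)\,b(E) \,$ as an ordinary matrix product, whence $\, \Theta_{D\star E} = K_U\, b(D)b(E)\, K_U^{-1} = \Theta_D \Theta_E \,$. Either way the genuine content is the multiplicativity of $\, D \mapsto \Theta_D \,$, and the rest is transposition. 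The only place to be careful — the main source of potential error rather than of any real difficulty — is keeping straight the two multiplications ($\, \star \,$ versus ordinary composition) and the two variables ($\, x \,$ versus $\, z \,$) across the transposes, and in particular verifying that the left $\, x$-action and the right $\, z$-$\star$-action genuinely commute.
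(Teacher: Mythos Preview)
Your argument is essentially the paper's own proof: the paper runs the very chain
$\psi_U\star(D_1\star D_2)=(\psi_U\star D_1)\star D_2=(\Theta_1.\psi_U)\star D_2=\Theta_1.(\psi_U\star D_2)=\Theta_1\Theta_2.\psi_U$
and then reads off $B(D_1\star D_2)=(\Theta_1\Theta_2)^t=(B(D_1)^tB(D_2)^t)^t=B(D_2)\star B(D_1)$ from \eqref{newmult}, exactly as you do. Your alternative route through $\Theta_D=K_U\,b(D)\,K_U^{-1}$ and the entrywise check $b(D\star E)=b(D)\,b(E)$ is also sound.

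One slip worth fixing: you twice write $(\Theta_D\Theta_E)^t=\Theta_E^{\,t}\,\Theta_D^{\,t}$. That identity is \emph{false} for matrices over a noncommutative ring (here the entries lie in $\c(x)[\d_x]$); it is precisely the point of the section on transposes that one only has $(D\star E)^t=E^tD^t$, not $(DE)^t=E^tD^t$. Fortunately this step is superfluous: both of your chains already pass through $(\Theta_D\Theta_E)^t$, and you can (and should) match them there, as the paper does. Ironically, this is the very bookkeeping hazard you flagged in your closing sentence.
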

\begin{proof}
Calculate: we have 
\begin{align*}
\psi_U \star (D_1 \star D_2) &\,=\, (\psi_U \star D_1) \star D_2\\
&\,=\,  (\Theta_1.\psi_U) \star D_2\\
&\,=\,  \Theta_1.(\psi_U \star D_2)\\
&\,=\,   \Theta_1\Theta_2.\psi_U \ .
\end{align*}
That means that 
\begin{align*}
B(D_1 \star D_2) &\,=\, (\Theta_1\Theta_2)^t\\
&\,=\,  (B(D_1)^t B(D_2)^t)^t\\
&\,=\,  B(D_2) \star B(D_1)\ , 
\end{align*}
as claimed.
\end{proof}

As another application of Proposition~\ref{3eq}, we prove the following 
fact, which should be a first step towards Theorem~\ref{main}.
\begin{proposition}
\label{latt}
Let $\, W \in \beta(\C) \subset \Grat \,$.  Then the lattice 
$\, L_W \,$ 
in $\, \c(z)^r \,$ is the base-point $\, \c[z]^r \,$.
\end{proposition}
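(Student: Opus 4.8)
The plan is to exploit the bispectral involution $\, b \,$ together with Proposition~\ref{3eq}, rather than attacking the lattice $\, L_W \,$ directly. The lattice $\, L_W \,$ consists of the leading coefficients of the operators in $\, M_W = \D(\c[z], W) \,$. The obstacle with a direct approach is that one must understand, for every operator $\, D \in M_W \,$, what its top-order coefficient can be; there is no obvious a priori bound on the orders of operators in $\, M_W \,$, so it is not clear how to enumerate the possible leading coefficients. The idea is to pass to the bispectral dual, where the analogous data become the \emph{low}-order (rather than high-order) behaviour and can be read off from the explicit Baker function \eqref{psi1}.

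First I would take $\, W = \beta(X,Y;v,w) \,$ with stationary Baker function given by \eqref{psi1}, and apply Proposition~\ref{bbij} (or its second proof via $\, K_W \,$) to translate the condition ``$\, D \in M_W = \D(\c[z], W) \,$'' into a statement about $\, \D(b(W), \text{something}) \,$. Concretely, leading coefficients of operators in $\, \D(\c[z], W) \,$ should correspond, under $\, D \mapsto \Theta^t \,$, to operators of order zero -- i.e.\ multiplication operators, hence elements of $\, \c(z)^r \,$ -- on the dual side. The claim $\, L_W = \c[z]^r \,$ then becomes the assertion that these zero-order dual operators are exactly the polynomial vectors. Since $\, b(W) = \beta\bigl(b(X,Y;v,w)\bigr) \,$ is again in $\, \beta(\C) \,$ by \eqref{bispinv}, I can feed its explicit Baker function into Proposition~\ref{3eq}(ii): a multiplication operator $\, \Theta(x) \,$ of order zero satisfies $\, \psi_{b(W)}(x,z)\star\Theta(z)^t = (\text{const}).\psi_{\c[z]^r}(x,z) \,$ precisely when $\, \Theta \,$ is polynomial in $\, z \,$, because the only singularities in $z$ of $\, \psi_{b(W)} \,$ are the simple poles visible in \eqref{psi1}, and multiplying by a polynomial keeps every row inside $\, b(W) \,$ while multiplying by a genuinely rational $\, \Theta \,$ would introduce spurious poles violating membership.

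The key steps, in order, are: (1) identify $\, L_W \,$ with a space of zero-order operators in the bispectral dual using Propositions~\ref{3eq} and~\ref{bbij}; (2) write the condition for such a zero-order operator explicitly via Proposition~\ref{3eq}(ii), using the fact that $\, \widetilde{\psi}_{b(W)} \,$ has the form $\, \I + (\text{simple poles at the }\lambda_i) \,$; (3) show that the pole structure forces $\, \Theta^t \,$ to be a polynomial vector, i.e.\ an element of $\, \c[z]^r \,$, and conversely that every polynomial vector arises. The main obstacle I expect is step~(3): making precise that the only way for $\, \psi_{b(W)}(x,z)\star\Theta(z)^t \,$ to land in $\, \psi_{\c[z]^r} \,$ (equivalently, for the row vectors to remain regular away from the poles built into $\, b(W) \,$) is for $\, \Theta \,$ to carry no denominator. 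One must rule out cancellation of a rational denominator in $\, \Theta \,$ against the poles of $\, \psi_{b(W)} \,$ -- this should follow from the conditions 3(a), 3(b) defining $\, \beta(\C) \,$, which pin down the residues and constant terms at each $\, \lambda_i \,$ tightly enough that no nontrivial cancellation is possible, but verifying this is where the real work lies. An alternative, possibly cleaner, route for step~(3) would be to invoke Lemma~\ref{zW}'s circle of ideas: show directly that $\, L_W \,$ is stable under multiplication by $\, z \,$ and contains $\, \c[z]^r \,$, then use that $\, W \in \Grad \,$ by construction forces equality; but the bispectral translation seems more in the spirit of this section and reuses Proposition~\ref{3eq}, which is the tool just developed.
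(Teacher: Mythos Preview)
Your instinct to route the argument through the bispectral involution and Proposition~\ref{3eq} is exactly what the paper does, and your observation that $\, b(W) \,$ is again in $\, \beta(\C) \,$ so that its Baker function is explicit is the right starting point. But step~(1) of your plan contains a genuine error: the correspondence $\, D \mapsto \Theta^t \,$ does \emph{not} convert ``leading coefficient of $D$'' into ``$\Theta$ has order zero''. Under the anti-involution $\, b \,$ (which swaps $z$ and $\d_z$), an operator $\, D = p(z)\d_z^m + \ldots \,$ is sent to something whose order in $\d_x$ is governed by $\deg p$, not by $m$; so $\Theta = b(D)K_W^{-1}$ has order roughly $\deg p$, and there is no reason for it to be a multiplication operator. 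Your step~(3) then tries to analyse a situation that does not actually arise, and the pole-cancellation argument you sketch is attacking the wrong question.

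The paper's fix is to stay on the $K$-operator side (condition (iii) of Proposition~\ref{3eq}) rather than the $\Theta$-side. Since $K_{\c[z]^r} = \I$, one has $D \in \D(\c[z],W)$ iff $D$ is differential and $\Theta := b(D)K_W^{-1}$ is differential; applying $b(\cdot)^t$ to the latter and using \eqref{binv} gives the equivalent condition $D^t = K_{b(W)}\,P$ for some \emph{polynomial} operator $P$. Now the key point is purely formal: because $K_{b(W)} = \I + (\text{negative order})$, the leading coefficient of $D^t$ equals that of $P$, hence is polynomial --- this gives $L_W \subseteq \c[z]^r$. For the reverse inclusion one must manufacture, for each $p \in \c[z]^r$, a polynomial $P$ with leading coefficient $p$ such that $K_{b(W)}P$ is differential; here the explicit formula \eqref{psitr} shows that $P = p(x)\,g(\d_x)$ with $g(\lambda) = \det(\lambda\I + X)$ does the job, since $g(\d_x)$ clears the factor $(\d_x\I + X^t)^{-1}$. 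So the correct translation is not ``leading coefficients $\leftrightarrow$ zero-order $\Theta$'' but rather ``$D \leftrightarrow$ polynomial $P$ with $D^t = K_{b(W)}P$, and the leading coefficients match''.
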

\begin{proof}
Let $\, \D := \D(\c[z],W) \,$.  We have to show that
\begin{enumerate}
\item {the leading coefficients of the operators in $\, \D \,$ are all 
(vectors of) polynomials;}
\item { every polynomial $\, p \,$ occurs as the leading coefficient 
of an operator in $\, \D \,$.} 
\end{enumerate}
Since $\, K_{\c[z]} = \I \,$, Proposition~\ref{3eq} tells us that $\, D \,$ 
belongs to $\, \D \,$ if and only if $\, D \,$ and $\, b(D) K_W^{-1} \,$ 
are both differential.  Since
$$
b(b(D) K_W^{-1})^t \,=\, b(K_W^{-1})^t D^t \,=\, K_{b(W)}^{-1} D^t \ ,
$$
it is equivalent to say that $\, D \,$ belongs to $\, \D \,$ if and only if
it is differential and its transpose has the form
$$
D^t \,=\, K_{b(W)} P \,,
$$
where $\, P \,$ is a {\it polynomial} operator (that is, all its 
coefficients are polynomials).  Note first that the leading coefficient 
of $\,D^t \,$ is the same as that of $\, P \,$, hence is a polynomial, which 
proves (1) above.  To prove (2), we have to find enough polynomial 
operators $\, P \,$ such that $\, K_{b(W)} P \,$ is differential.  
For that, recall that if $\, W = \beta(X,Y;v,w) \,$, then 
$$
K_{b(W)} \,=\, \I \,+\, 
v^t(x\I - Y^t)^{-1}(\d_x \I + X^t)^{-1}w^t \ .
$$
So if $\, g(\lambda) := \det(\lambda \I + X) \,$, then 
$\, K_{b(W)} g(\d_x) \,$ is differential; more generally, for any 
$\, p \in \c[z]^r \,$, $\, K_{b(W)} \,p(x)\, g(\d_x) \,$ is differential. 
This operator has leading coefficient $\, p \,$, and its transpose belongs to 
$\, \D \,$ (since $\, P := p(x)\, g(\d_x) \,$ is a polynomial operator).  
That completes the proof.  
\end{proof}
\section{Groups}

So far we have worked with the group $\, \Gamma \,$ of all holomorphic 
maps $\, \c \to GL_r(\c) \,$.  However, even in the case $\, r=1 \,$ this 
group is larger than we need: in that case we usually work with the 
group of functions of the form $\, \gamma(z) = e^{p(z)} $ where 
$\, p \,$ is a {\it polynomial}, and there is no advantage in letting 
$\, p \,$ be an arbitrary entire function.  That would even be confusing. 
One reason is that if $\, p \,$ is a polynomial, then the action of 
$\, \gamma \,$ on ideals in the Weyl algebra $\, A \,$ comes from the 
automorphism $\, D \mapsto e^{p(z)} D e^{-p(z)} \,$ of $\, A \,$; if 
$\, p \,$ is not a polynomial, this map does not preserve $\, A \,$.
For $\, r>1 \,$, the maps of the form $\, \gamma(z) = \exp(p(z)) \,$ 
(where  $\, p \,$ is a polynomial map $\, \c \to \mathfrak{gl}_r(\c)\, $) 
do not form a group.  Of course, we can form the subgroup of $\, \Gamma \,$ 
which they generate, but it is not clear to me what this is.  Perhaps 
for some purposes a smaller group still  will suffice.

Note first that the subgroup $\, \c\I \subset \Gamma_{sc} \,$ of {\it constant} 
scalar matrices acts trivially on the spaces $\, \C \,$ (see 
Proposition~\ref{gsc}), so it is really the quotient group that 
acts; let us change notation so that from now on $\, \Gamma \,$ denotes 
this quotient group.  We then have the decomposition
$$
\Gamma \,=\, \Gamma_{sc} \times \Gamma_1 \ ,
$$
where $\, \Gamma_1 \,$ is the subgroup of maps $\, \gamma \,$ such that 
$\, \det \gamma \,$ is a scalar constant. Let $\, \Gamma_{sc}^{alg} \,$ 
be the subgroup of $\, \Gamma_{sc} \,$ consisting of 
all maps of the form $\, \gamma(z) = e^{p(z)} \I \,$ 
where $\, p \,$ is a {\it polynomial} (say with zero constant term), and set 
\begin{equation}
\Gamma^{alg} \,:=\, \Gamma_{sc}^{alg} \times PGL_r(\c[z]) \,\subset\, \Gamma \ .
\end{equation}
From an algebraic point of view this seems a quite natural group (see 
the next section); however, it has the disadvantage that it does not 
contain the 
1-parameter subgroups $\, \{ \exp(\alpha z^k t) \} \,$ of $\, \Gamma \,$ 
unless $\, \alpha \,$ is nilpotent. 

As well as $\, \Gamma \,$, there is the ``opposite'' group $\, \Gamma' \,$ 
of symplectomorphisms of each $\, \C_n \,$, obtained by reversing 
the roles of $\, (X;v) \,$ and $\, (Y;w) \,$.  We have 
$\, \Gamma' = b \Gamma b \,$ where $\, b \,$ is the bispectral involution 
\eqref{bispinv} (note that $\, b \,$ is {\it anti}-symplectic).  
We might ask whether $\, \Gamma \,$ and $\, \Gamma' \,$ 
generate the symplectomorphism group, but this kind of question 
seems too difficult.  More accessible should be the 
\begin{conjecture}
\label{conj}
The group generated by $\, \Gamma^{alg} \,$ and 
$\, (\Gamma')^{alg} := b \Gamma^{alg} b\,$ acts transitively 
on $\, \C_n \,$.
\end{conjecture}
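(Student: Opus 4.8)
The plan is to reduce the transitivity for $\, r > 1 \,$ to the already–understood case $\, r = 1 \,$, using the extra freedom in $\, \Gamma^{alg} \,$ — namely the factor $\, PGL_r(\c[z]) \,$ — and its bispectral partner to kill the ``internal'' degrees of freedom, so that only the matrices $\, X \,$ and $\, Y \,$ remain to be matched. Concretely, I would show that every point of $\, \C_n(r) \,$ can be carried, by an element of the group generated by $\, \Gamma^{alg} \,$ and $\, (\Gamma')^{alg} \,$, into the embedded copy of $\, \C_n(1) \,$ sitting inside $\, \C_n(r) \,$ as the locus where each $\, v_i \,$ is a multiple of $\, e_1 \,$ and each $\, w_i \,$ a multiple of $\, e_1^t \,$. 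Transitivity then follows from transitivity on this copy, which is the known $\, r=1 \,$ statement.

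The normalisation itself I would carry out on the dense stratum $\, \C^d \,$, where $\, Y = \diag(\lambda_1, \ldots, \lambda_n) \,$ has distinct eigenvalues. There the factor $\, PGL_r(\c[z]) \subset \Gamma^{alg} \,$ acts by $\, (v \circ \gamma)_i = v_i \gamma(\lambda_i) \,$ and $\, (w \circ \gamma)_i = \gamma(\lambda_i)^{-1} w_i \,$ (formulae \eqref{gform}). Since the $\, \lambda_i \,$ are distinct, $\, \gamma \,$ may be chosen by interpolation to take arbitrary prescribed invertible values $\, \gamma(\lambda_i) \,$; and because $\, A_r \,$ is a single orbit of $\, GL_r(\c) \,$, this brings every pair $\, q_i = (v_i, w_i) \,$ simultaneously to the standard representative $\, v_i = e_1 \,$, $\, w_i = -e_1^t \,$. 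This step also alters the diagonal entries $\, X_{ii} \,$ (by the term $\, v_i \gamma'(\lambda_i)\gamma(\lambda_i)^{-1} w_i \,$ in \eqref{gform}); afterwards I apply a scalar flow $\, X \mapsto X - p'(Y) \,$ from $\, \Gamma_{sc}^{alg} \,$, choosing the polynomial $\, p \,$ (again by interpolation at the distinct $\, \lambda_i \,$) so that every $\, X_{ii} \,$ becomes $\, 0 \,$. The off-diagonal part of $\, X \,$ is then determined by $\, v,w \,$ and the $\, \lambda_i \,$ (the analogue of \eqref{ij}), so the resulting point depends only on the unordered set $\, \{\lambda_i\} \,$ and lies in the embedded $\, \C_n(1) \,$ described above.

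To match the eigenvalue data I note that the scalar flows $\, X \mapsto X - p'(Y) \,$ and their $\, b \,$-conjugates, the bispectral scalar flows $\, Y \mapsto Y - q'(X) \,$, both fix $\, v \,$ and $\, w \,$; hence they preserve the embedded $\, \C_n(1) \,$, and on it they restrict precisely to the generators of the $\, r = 1 \,$ group. By the known transitivity for $\, r = 1 \,$ (Berest--Wilson), these already act transitively on $\, \C_n(1) \,$, so any two normalised points — whatever their eigenvalue sets — lie in a single orbit. Composing the reductions of two arbitrary points with such an $\, r = 1 \,$ element then produces a group element carrying one to the other, which is the desired transitivity.

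The main obstacle is the very first move: showing that an arbitrary point, where $\, X \,$ and $\, Y \,$ may \emph{both} be non-generic, can be pushed into $\, \C^d \,$. The natural tool is a bispectral scalar flow $\, Y \mapsto Y + f(X) \,$: if $\, X \,$ is regular this succeeds by a diagonal-dominance argument (take $\, f \,$ widely spread on the eigenvalues of $\, X \,$), and dually a scalar flow regularises $\, Y \,$ when it is $\, X \,$ that must be held fixed. The delicate case is when neither can be regularised directly; there one must argue that $\, Y + f(X) \,$ fails to have distinct eigenvalues for \emph{every} polynomial $\, f \,$ only when $\, X \,$ and $\, Y \,$ share a proper invariant subspace compatible with $\, \mathrm{im}(v) \,$, contradicting the stability (freeness of the $\, GL(n,\c) \,$-action) built into $\, \mu^{-1}(-\I) \,$. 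Turning this genericity-via-stability heuristic into a clean proof is, I expect, the hard part; the remaining steps are essentially interpolation together with the $\, r = 1 \,$ input.
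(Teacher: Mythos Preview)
There is nothing to compare against: in the paper this statement is a \emph{conjecture}, not a theorem, and no proof is offered. The paper only records that the case $r=1$ is the theorem of \cite{BW2}, speculates that $r=2$ should follow from \cite{BP}, and suggests, exactly as you do, that one might try to reduce to $r=1$ via the natural embeddings $\C_n(r)\hookrightarrow\C_n(r+1)$. So your strategy is precisely the one the author has in mind, but neither you nor the paper carries it through.

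Your outline is internally coherent on the generic stratum. The interpolation step is legitimate: since $SL_r(\c)$ is generated by elementary matrices, one can build $\gamma\in SL_r(\c[z])$ with prescribed values $\gamma(\lambda_i)=g_i$ by interpolating the off-diagonal entries of an elementary factorisation, and $GL_r(\c)$ does act transitively on the pairs $(\xi,\eta)$ with $\xi\eta=-1$. The subsequent use of $\Gamma_{sc}^{alg}$ and its $b$-conjugate to invoke the $r=1$ transitivity on the embedded $\C_n(1)$ is also sound, since those flows fix $v$ and $w$.

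The genuine gap is the one you flag yourself: moving an arbitrary point into $\C^d$. Your ``genericity-via-stability'' heuristic is not a proof. The claim that $Y+f(X)$ has repeated eigenvalues for \emph{every} polynomial $f$ only when $X,Y$ share a proper invariant subspace is false as stated (take $X$ nilpotent with a single Jordan block: then $f(X)$ is always a polynomial in $X$, and $Y+f(X)$ can certainly have repeated eigenvalues for all $f$ without $X$ and $Y$ having a common invariant subspace; one needs a more refined statement and a genuine argument). This is exactly the step at which the conjecture is open, and until it is handled your reduction does not yield a proof.
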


For $\, r=1 \,$ this is proved in \cite{BW2}.  For $\, r=2 \,$ it is 
presumably the same as the result proved in \cite{BP}; that is, I guess 
that our group is the same as the group of tame automorphisms of the 
quiver considered in \cite{BP}, but that is not clear to me at present.

It is not very clear where ``the'' group referred to in 
Conjecture~\ref{conj} is supposed to live: perhaps in the automorphism 
group of some quiver?  or in $\, \Aut M_r(A) \,$ (see the next section)?  
For $\, r>1 \,$ it is not clear to me that $\, \Aut M_r(A) \,$ acts on
our spaces (even the fat submodules).

There are natural inclusions $\, \C_n(r) \hookrightarrow \C_n(r+1) \,$ 
given by\footnote{If we interpret $\, \C_n(r) \,$ as a space of 
bundles over a noncommutative $\,\P^2 $, this corresponds to adding a 
trivial line bundle. On the level of $\, \Grad$ it sends $\, W \,$ 
to $\, W \oplus \c[z] \,$.} 
adding a column of zeros to $\, v \,$ and a row of zeros to $\, w \,$: 
these inclusions are compatible with the usual embeddings of  
$\, \Gamma (r) \,$ into $\, \Gamma (r+1) \,$.  So we might hope to prove 
Conjecture~\ref{conj} by  induction on $\, r \,$, or even by direct 
reduction to the case $\, r=1 \,$, as in \cite{BP}. 

\section{$\Gamma$ and automorphisms}

Let $\, A^{an} \,$ momentarily denote the algebra of differential 
operators with entire coefficients. Then for each 
$\, \gamma \in \Gamma \,$, the map $\, D \mapsto \gamma D \gamma^{-1} \,$ 
is an automorphism of $\, M_r(A^{an}) \,$.  In general this map does 
not preserve the subalgebra $\, M_r(A) \,$.  More precisely, we have 
the following.
\begin{proposition}
The subgroup of $\, \Gamma \,$ which preserves $\, M_r(A) \,$ is 
exactly $\, \Gamma^{alg} \,$.
\end{proposition}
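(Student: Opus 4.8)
The plan is to test the automorphism $\Ad_\gamma\colon D\mapsto \gamma D\gamma^{-1}$ of $M_r(A^{an})$ on a generating set of the subalgebra $M_r(A)$. As a $\c$-algebra, $M_r(A)$ is generated by the constant matrices $M_r(\c)$ together with the scalar operators $z\I$ and $\d_z\I$. Now $\gamma$ commutes with $z\I$; a short computation gives $\Ad_\gamma(\d_z\I)=\d_z\I-\gamma'\gamma^{-1}$ (multiplication by the matrix $\gamma'\gamma^{-1}$); and $\Ad_\gamma(M)=\gamma M\gamma^{-1}$ for $M\in M_r(\c)$. Since $\Ad_\gamma$ is already known to be an algebra automorphism, it carries $M_r(A)$ into itself exactly when it sends these generators into $M_r(A)$. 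A zeroth-order operator lies in $M_r(A)$ iff its entries are polynomials, so $\gamma$ preserves $M_r(A)$ precisely when: (2) $\gamma'\gamma^{-1}\in M_r(\c[z])$, and (3) $\gamma M\gamma^{-1}\in M_r(\c[z])$ for every constant matrix $M$. Equality $\Ad_\gamma(M_r(A))=M_r(A)$, rather than mere inclusion, is then automatic because $\Gamma^{alg}$ is a group and $\gamma^{-1}$ satisfies the same conditions.

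Next I would use the decomposition $\Gamma=\Gamma_{sc}\times\Gamma_1$: writing $\det\gamma=e^{P}$ and $\mu:=e^{P/r}\I\in\Gamma_{sc}$, the factor $\sigma:=\mu^{-1}\gamma$ has $\det\sigma\equiv 1$. Because $\mu=e^{p}\I$ is scalar it drops out of the conjugation in (3) and contributes only $p'\I$ in (2), so the conditions become (3$'$) $\sigma M\sigma^{-1}\in M_r(\c[z])$ for all constant $M$, and (2$'$) $p'\I+\sigma'\sigma^{-1}\in M_r(\c[z])$. The easy (sufficiency) direction is then immediate: if $p$ is a polynomial and $\sigma\in SL_r(\c[z])$, then $\sigma^{-1}=\mathrm{adj}\,\sigma$ is polynomial, so $\sigma M\sigma^{-1}$, $\sigma'\sigma^{-1}$ and $p'\I$ are all polynomial, showing that $\Gamma^{alg}$ preserves $M_r(A)$.

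The substance is the converse: I must show that (3$'$) forces the entire matrix $\sigma$ (with $\det\equiv1$) to have polynomial entries. Taking $M=E_{kl}$ in (3$'$) shows that every product $\sigma_{ik}(\sigma^{-1})_{lj}$ is a polynomial, hence is bounded by a fixed power of $|z|$ for large $|z|$. Summing the squared moduli over all four indices gives $\big(\sum_{i,k}|\sigma_{ik}|^2\big)\big(\sum_{l,j}|(\sigma^{-1})_{lj}|^2\big)\le C(1+|z|)^{N}$, i.e. the product of the Frobenius norms of $\sigma$ and $\sigma^{-1}$ grows at most polynomially. Since $\det\sigma\equiv1$, the singular values of $\sigma$ have product $1$, so by AM--GM the Frobenius norm of $\sigma^{-1}$ is bounded below by $\sqrt r$; dividing, the entries of $\sigma$ themselves grow at most polynomially. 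An entire function of polynomial growth is a polynomial, so $\sigma\in SL_r(\c[z])$, defining a class in $PGL_r(\c[z])$. With $\sigma$ now polynomial, $\sigma'\sigma^{-1}$ is polynomial, so (2$'$) forces $p'$, and hence $p$, to be a polynomial, giving $\mu\in\Gamma_{sc}^{alg}$. Therefore $\gamma=\mu\sigma\in\Gamma_{sc}^{alg}\times PGL_r(\c[z])=\Gamma^{alg}$, as required.

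The only genuinely delicate step is this growth estimate: turning the algebraic condition (3$'$) into polynomiality of $\sigma$ rests on the Liouville-type fact that an entire function of polynomial growth is a polynomial, together with the determinant normalization that separates the growth of $\sigma$ from that of $\sigma^{-1}$. Everything else is the routine identification of a generating set and bookkeeping with the $\Gamma_{sc}\times\Gamma_1$ decomposition. Two minor points to check carefully are that the identity $\Ad_\gamma(\d_z\I)=\d_z\I-\gamma'\gamma^{-1}$ holds with the correct matrix ordering, and that reducing to generators is legitimate because $\Ad_\gamma$ is an algebra automorphism of $M_r(A^{an})$.
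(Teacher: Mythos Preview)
Your argument is correct. The overall architecture matches the paper's: test $\Ad_\gamma$ on the generators $\d_z\I$ and the elementary matrices, decompose $\gamma$ as a scalar exponential times a determinant-one factor, and then show that the determinant-one factor must be polynomial. The difference lies in the order and in the key step.

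The paper first extracts $p$ by taking the trace of $\gamma'\gamma^{-1}=p'\I+\gamma_1'\gamma_1^{-1}$ and observing that $\tr(\gamma_1'\gamma_1^{-1})=(\det\gamma_1)'/\det\gamma_1=0$, so $p'$ is polynomial immediately. For the determinant-one factor $\gamma_1$, the paper uses a short determinant trick: fixing one entry $a$ of $\gamma_1$, the products $a\,(\gamma_1^{-1})_{ij}$ are all polynomials, and taking the determinant of the matrix $(a\,(\gamma_1^{-1})_{ij})$ gives $a^r\det(\gamma_1^{-1})=a^r$, a polynomial; since $a$ is entire, $a$ itself is a polynomial. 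You instead bound the product of Frobenius norms, use AM--GM on the singular values (with $\prod s_i=1$) to see $\|\sigma^{-1}\|_F\ge\sqrt r$, and conclude polynomial growth of each entry of $\sigma$; only afterwards do you read off that $p'$ is polynomial. Both routes ultimately rest on the Liouville-type fact that an entire function of polynomial growth is a polynomial; the paper's determinant identity is a one-line algebraic shortcut to that growth bound, while your norm argument is a slightly longer but perfectly valid analytic substitute.
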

\begin{proof}
It is obvious that $\, \Gamma^{alg} \,$ preserves $\, M_r(A) \,$.
Conversely, suppose that $\, \gamma \,$ preserves $\, M_r(A) \,$.  
Since $\, \gamma (\d\I) \gamma^{-1} = \d\I - \gamma' \gamma^{-1} \,$, 
$\,\gamma \,$ must satisfy the condition
\begin{equation}
\label{cond1}
\gamma' \gamma^{-1} \text{\, \ is a polynomial matrix.}
\end{equation}
Also, $\,\gamma \,$ must satisfy the condition
\begin{equation}
\label{cond2}
\gamma e_{rs} \gamma^{-1} \text{\ \, is a polynomial matrix for all }
(r,s) \ ,
\end{equation}
where $\, e_{rs} \,$ is the matrix with $1$ in the place $\, (r,s) \,$ 
and zeros elsewhere.  To see that these conditions force $\, \gamma \,$ 
to belong to $\, \Gamma^{alg} \,$, write it 
in the form $\, \gamma(z) = e^{p(z)} \gamma_1(z) \,$, 
where $\, \det \gamma_1 \equiv 1 \,$.  Then 
$\, \gamma' \gamma^{-1} = p'(z) \I + \gamma_1' \gamma_1^{-1} \,$.  
Here the second term has trace zero, so \eqref{cond1} shows that $\, p' $, 
hence also $\, p \,$, is a polynomial.  It remains to show that 
$\, \gamma_1 \,$ is polynomial.  Clearly, $\, \gamma_1 \,$ satisfies 
the same condition \eqref{cond2} as $\, \gamma \,$.  That this implies 
that it is polynomial 
will follow from the stronger statement: if $\, A \in \Gamma \,$ 
and $\, B \in \Gamma_1 \,$ and all the matrices 
$\, A(z) e_{rs} B(z) \,$ are polynomial, then $\, A \,$ is 
polynomial.  Indeed, the $\, (i,j) \,$ entry in $\, A e_{rs} B \,$ 
is $\, A_{ir} B_{sj} \,$, so our condition says that the product 
of any entry in $\, A \,$ with any entry in $\, B \,$ is a polynomial.
Fix an entry in $\, A \, $, say $\, a(z) \,$; then for all $\, (i,j) \,$ 
we have 
$$
a(z) B_{ij}(z) \,=\, p_{ij}(z) 
$$
where the $\, p_{ij} \,$ are polynomials.  Since $\, \det B \equiv 1 \,$, 
taking determinants gives $\, a(z)^r = \det(p_{ij}) \,$. Thus  
$\, a(z)^r \,$ is a polynomial; since $\, a \,$ is entire, this implies  
that $\, a(z) \,$ is a polynomial, as required.
\end{proof}

In the next proposition we regard $\, \Gamma^{alg} \,$ as a subgroup 
of $\, \Aut M_r(A) \,$. 
\begin{proposition}
$\Gamma^{alg} \,$ is exactly the isotropy group of $\, z\I \in M_r(A) \,$. 
\end{proposition}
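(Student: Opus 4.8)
The plan is to prove two inclusions. One direction is immediate: every $\, \gamma \in \Gamma^{alg} \,$ has entries that are functions of $\, z \,$ alone, so $\, \gamma \,$ commutes with the multiplication operator $\, z\I \,$; hence conjugation by $\, \gamma \,$ fixes $\, z\I \,$, and $\, \Gamma^{alg} \,$ is contained in the isotropy group. The substance is the reverse inclusion: I must show that any $\, \phi \in \Aut M_r(A) \,$ with $\, \phi(z\I) = z\I \,$ already lies in $\, \Gamma^{alg} \,$.

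First I would exploit the fact that an automorphism fixing $\, z\I \,$ must preserve its centralizer. The centralizer of $\, z \,$ in $\, A \,$ is $\, \c[z] \,$, so the centralizer of $\, z\I \,$ in $\, M_r(A) = A \otimes_{\c} M_r(\c) \,$ is exactly $\, M_r(\c[z]) \,$; thus $\, \phi \,$ restricts to an automorphism of $\, M_r(\c[z]) \,$ fixing $\, z\I \,$, and in particular fixing the whole central subalgebra $\, \c[z]\I \,$.

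The key step is to classify such automorphisms. Since $\, \c[z] \,$ is a principal ideal domain, $\, \Pic(\c[z]) = 0 \,$; the idempotent $\, \phi(e_{11}) \,$ therefore cuts out a free rank-one $\, \c[z] \,$-module and so is conjugate in $\, M_r(\c[z]) \,$ to $\, e_{11} \,$. Completing this to a conjugation of the full system of matrix units shows (Rosenberg--Zelinsky) that $\, \phi|_{M_r(\c[z])} = \Inn(g) \,$ for some $\, g \in GL_r(\c[z]) \,$, the induced automorphism of the centre $\, \c[z] \,$ being forced to be the identity because it fixes $\, z \,$. Conjugation by $\, g \,$ is the action of an element of $\, PGL_r(\c[z]) \subset \Gamma^{alg} \,$, so after replacing $\, \phi \,$ by $\, \Inn(g)^{-1}\circ\phi \,$ I may assume $\, \phi \,$ fixes $\, M_r(\c[z]) \,$ pointwise.

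It remains to pin down $\, \phi(\partial_z\I) \,$, since $\, M_r(A) \,$ is generated by $\, M_r(\c[z]) \,$ and $\, \partial_z\I \,$. Commuting with every matrix unit forces $\, \phi(\partial_z\I) \,$ to be a scalar matrix $\, D\I \,$ with $\, D \in A \,$; the relations $\, [\partial_z, z] = 1 \,$ and $\, [\partial_z, p(z)] = p'(z) \,$, preserved by $\, \phi \,$ since it fixes $\, \c[z]\I \,$, then force $\, D = \partial_z + a_0(z) \,$ with $\, a_0 \in \c[z] \,$. This is precisely conjugation of $\, \partial_z\I \,$ by $\, e^{q(z)}\I \,$ with $\, q' = -a_0 \,$, an element of $\, \Gamma_{sc}^{alg} \,$; agreeing on the generators, $\, \phi \,$ equals that conjugation, whence $\, \phi \in PGL_r(\c[z]) \cdot \Gamma_{sc}^{alg} = \Gamma^{alg} \,$. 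I expect the Morita/Rosenberg--Zelinsky classification in the third step to be the only real obstacle; everything else is bookkeeping with commutators and generators.
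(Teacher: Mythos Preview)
Your argument is correct, and it is genuinely different from the paper's. The paper first invokes the structural decomposition $\Aut M_r(A) = \Inn M_r(A) \rtimes \Aut A$ (proved separately via $\Pic A \simeq \Pic M_r(A)$), writes an arbitrary automorphism as $D \mapsto T\sigma(D)T^{-1}$ with $T \in GL_r(A)$ and $\sigma \in \Aut A$, and then reads off $\sigma(z)=z$ and $T \in GL_r(\c[z])$ from the single equation $zT = T\sigma(z)$ by comparing leading coefficients in $\partial_z$. You instead work intrinsically: the centralizer of $z\I$ is $M_r(\c[z])$, so any $\phi$ fixing $z\I$ restricts to a $\c[z]$-linear automorphism of $M_r(\c[z])$, which by Rosenberg--Zelinsky (using only $\Pic\c[z]=0$) is inner; after stripping that off you determine $\phi(\partial_z\I)$ by commutator relations. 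Your route is more self-contained in that it avoids the semi-direct product theorem for $\Aut M_r(A)$ (which rests on a nontrivial fact about the Weyl algebra), trading it for the elementary Skolem--Noether/Rosenberg--Zelinsky statement over a PID. The paper's route, on the other hand, is shorter once that structural result is in hand, and it yields the decomposition of $\Aut M_r(A)$ as a by-product.
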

\begin{proof}
It is obvious that $\, \Gamma^{alg} \,$ fixes $\, z\I \,$. The converse 
depends on the fact that $\, \Aut M_r(A) \,$ is the semi-direct 
product of $\, \Inn M_r(A) \,$ and $\, \Aut A \,$ (acting on each 
matrix entry); thus every automorphism has the form 
$\, D \mapsto T \sigma(D) T^{-1} \,$ for some $\, \sigma \in \Aut A \,$, 
$\, T \in GL_r(A) \,$.  If this fixes $\, z\I \,$, then we have 
$\, z\I = T\sigma(z)\I T^{-1} \,$, or $\, zT = T \sigma(z) \,$.  
Looking at the leading coefficient of the matrix operator $\, T \,$, 
we conclude that $\, \sigma(z) = z \,$, that is, 
$\, \sigma \in \Gamma_{sc}^{alg} \,$.   We now have 
$\, zT = Tz \,$, which implies that $\, T \,$ is 
an operator of order zero in $\, \d_z \,$, hence belongs to 
$\, GL_r(\c[z]) \,$ as claimed.
\end{proof}

For completeness, we give the proof that $\, \Aut M_r(A) \,$ is a semi-direct 
product, as claimed above.  For any algebra $\, A \,$ we have a 
commutative diagram
\begin{equation*}
\begin{diagram}[small]
&\Aut A & \rTo & \Pic A \\
&\dInto^{\iota} & \rdDotsto & \dTo_{\simeq}\\
\Inn M_r(A) \hookrightarrow \ & \Aut M_r(A)& \rTo& \Pic M_r(A) 
\end{diagram}
\end{equation*}
where the inclusion $\, \iota \,$ makes an automorphism of $\, A \,$ 
act separately on each entry of a matrix. If $\, A \,$ is the Weyl 
algebra, then the top horizontal arrow, and hence also 
the dotted diagonal arrow, is an isomorphism.  We may use this dotted arrow 
to identify $\, \Aut A \,$ with $\, \Out M_r(A) \,$; the vertical 
arrow $\, \iota \,$ then splits the exact sequence in the second row 
of the diagram.
\bibliographystyle{amsalpha}

\end{document}